\documentclass{article}
\usepackage{amsmath,amssymb,latexsym,amsthm,mathrsfs}

\newtheorem{theorem}{Theorem}[section]
\newtheorem{proposition}[theorem]{Proposition}
\newtheorem{lemma}[theorem]{Lemma}

\newcommand{\ra}{\rightarrow}

\theoremstyle{definition}
\newtheorem{definition}[theorem]{Definition}
\newtheorem{example}[theorem]{Example}
\newtheorem{problem}[theorem]{Problem}

\theoremstyle{remark}
\newtheorem{remark}[theorem]{Remark}

\title{Merging and stability for time inhomogeneous finite Markov chains}
\author{L. Saloff-Coste
        \thanks{Research partially supported by NSF grant DMS 0603886}\\
{\small Department of mathematics}\\
{\small Cornell University}
\and  J. Z\'u\~niga
      \thanks{Research partially supported by NSF grants
DMS 0603886, DMS 0306194, DMS 0803018 and by a NSF Postdoctoral Fellowship.}\\
{\small Department of mathematics}\\
{\small Stanford University}}

\begin{document}
\maketitle

\begin{abstract} We discuss problems posed by the quantitative study of time
inhomogeneous Markov chains. The two main notions for our purpose are 
merging and stability. Merging (also called weak ergodicity)
occurs when the chain asymptotically forgets where it started.
It is a loss of memory property.
Stability relates to the question of whether or not, despite
temporary variations, there is a rough shape describing the long time 
behavior of the chain.
For instance, we will discuss an example where the long time behavior 
is roughly described by a binomial, with temporal variations.
\end{abstract}

\section{Introduction}  As is apparent from most text books, the definition
of a Markov process includes, in the most natural way, processes that are
time inhomogeneous. Nevertheless, most modern references quickly restrict
themselves to the time homogeneous case
by assuming the existence of a time homogeneous transition function, a case
for which there is a vast literature.

The goal of this paper is to point out some interesting problems concerning
the quantitative study of time inhomogeneous 
Markov processes and, in particular,
time inhomogeneous Markov chains on finite state spaces.
Indeed, almost nothing is known about the 
quantitative behavior of time inhomogeneous chains. Even the simplest examples resist analysis. We describe some precise
questions and examples, and a few results. They indicate the extent
of our lack of understanding, illustrate the difficulties and, perhaps,
point to some hope for progress.

We think the problems discussed below have an intrinsic mathematical
interest (indeed, some of them appear quite hard to solve) 
and are very natural. 
Nevertheless,  it is reasonable to ask whether or not time inhomogeneous
chains are relevant in some applications. Most of the recent interest 
in Markov chains is related to
Monte Carlo Markov Chain algorithms. In this context, one seeks
a Markov chain with a given stationary distribution. Hence, 
time homogeneity is  rather natural. See, e.g., \cite{RR}. Still, one of the  popular algorithms
of this sort, the Gibbs sampler, can be viewed as a 
time inhomogeneous chain (one that, despite huge amount of attention,
is still resisting analysis).   Time inhomogeneity also appears  
in the so-called simulated annealing algorithms.  
See \cite{DM} 
for a discussion that is close in spirit to the present work and for older 
references. 
However, certain special 
features of each of these two algorithms distinguish them 
from the more basic time inhomogeneous problems we want to discuss here.
Namely,
in the Gibbs sampler, each individual step is not ergodic 
(it involves only one coodinate) whereas, in the simulated annealing context,
the time inhomogeneity vanishes asymptotically. Other interesting stochastic
algorithms that present time inhomogeneity are discussed in \cite{DG}.

In many  applications of finite Markov chains, the kernel 
describes transitions between different classes in a population of interest. 
Assuming that these 
transition probabilities can be observed empirically, one application  
is to compute the stationary measure which describes 
the steady state of the system.
Examples of this type include models for population migrations between
countries, models for credit scores used to study the default risk of certain
loan portfolios, etc.  In such examples, it is  natural to consider cases when
the Markov kernel describing the evolution of the system depends on time
in either a deterministic or a random manner. 
The reason for the time inhomogeneity
may come, for example,  from seasonal factors. Or it may model various 
external events that are independent of the state of the system.  
Even if one decides that time homogeneity is warranted, 
one may wish to study the possible effects of
small but non-vanishing time dependent perturbations of the model. 
It seems rather important to understand whether or not such perturbations
can drastically alter the behavior of the underlying model.
This type of  practical questions fit nicely with the theoretical 
problems  discussed below.

A large class of natural examples of time inhomogeneous chains comes from
time inhomogeneous random walks on groups. These are discussed in 
\cite{Bo,SZ}. A special case is the semi-random transpositions 
model discussed in \cite{Ga,Mir,MPS,SZ}.

\section{Merging and stability}
This section introduces the two main properties we want to focus on: merging 
(in total variation or relative-sup) and stability. 
Given two Markov kernels  $K_1,K_2$, we set
$$K_1K_2(x,y)=
\sum_zK_1(x,z)K_2(z,y).$$
Given a sequence $(K_i)_1^\infty$ and $0\le m\le n$,
we set  $$K_{m,n}= K_{m+1}\cdots K_n,\;\;  K_{m,m}=I.$$

\subsection{Merging}
Recall that an aperiodic irreducible Markov kernel $K$ on a
finite state space admits a unique invariant probability measure $\pi$.
Further,  for any starting measure $\mu_0$ and any
large time $n$, the distribution $\mu_n=\mu_0K^n$ at time $n$ is
both essentially independent from the starting distribution $\mu_0$ and
well approximated by $\pi$.

Consider now the evolution of a system started according to an initial
distribution $\mu_0$ and driven by a sequence
$(K_i)_1^\infty$ of Markov kernels so that, at time $n$,
the distribution is $\mu_n=\mu_0K_1K_2\cdots K_n$. 
In \cite{Bl,Cohn} such a sequence $(\mu_n)_1^\infty$ of probability measures 
is called a ``set of absolute probabilities'' but we will not use this 
terminology here. In many cases,
for very large $n$, the distribution $\mu_n$ will be essentially
independent of the initial distribution $\mu_0$. Namely, if $\mu_0,\mu'_0$
are two initial distributions and $\mu_n=\mu_0 K_1\cdots K_n$,
$\mu'_n=\mu'_0 K_1\cdots K_n$, then it will often be the case that
$$\lim_{n\ra \infty}\|\mu_n-\mu'_n\|_{\mbox{\tiny TV}}= 0.$$
We call this loss of memory property  {\em \index{merging}merging}
(total variation merging, to be more precise). 

One may also want to know whether or not
$$\lim_{n\ra \infty}\sup_x\left\{\left|\frac{\mu'_n(x)}{\mu_n(x)}-1\right|
\right\}=0.$$
We call this later property relative-sup merging.  
Total variation merging is often discussed under the name of 
``weak ergodicity''. See, e.g.,  \cite{Bl,Cohn,Coh,Gri,Haj,Io,Pa}. 
We think ``merging'' is more appropriate.

If there is merging, then one may want to ask quantitative questions
about the merging time. For any $\epsilon\in (0,1)$, we set
\begin{equation}\label{def-Ttv}
T_1(\epsilon)=\inf \left\{n: \forall \mu_0,\mu'_0, \;
\|\mu_n-\mu'_n\|_{\mbox{\tiny TV}}
\le \epsilon\right\}\end{equation}
and
\begin{equation}\label{def-Trs}
T_\infty(\epsilon)=\inf \left\{n: \forall \mu_0,\mu'_0, \;
\left\|\frac{\mu'_n}{\mu_n}-1\right\|_{\infty}
\le \epsilon\right\}.\end{equation}

The next definition introduces the collective notions of merging 
and merging time for a given set $\mathcal Q$ of Markov kernels.
\begin{definition} Let $\mathcal Q$ be a set of Markov kernels on a finite state 
space. We say that $\mathcal Q$ is merging in total variation 
(resp. relative-sup) if any sequence $(K_i)_1^\infty$ of kernels in 
$\mathcal Q$ is merging in total variation (resp. relative-sup).
We say that $\mathcal Q$ has total-variation (resp. relative-sup)
$\epsilon$-merging time at most $T(\epsilon)$ if the total variation
(resp. relative-sup)
$\epsilon$-merging time (\ref{def-Ttv}) (resp. (\ref{def-Trs}))
is bounded above by $T(\epsilon)$, for any sequence 
$(K_i)_1^\infty$ of kernels in $\mathcal Q$.
\end{definition}

Let us emphasize that, from the view point of the present work, it is more 
natural to think in terms of properties shared by all sequences drawn from 
a set of kernels than   in terms of properties of some particular  sequence.

\subsection{Stability} \label{sec-stab}
In the previous section, the notion of
merging was introduced as a natural generalization  
of the loss of memory property in the time 
inhomogeneous context.
The notion of {\em stability} introduced below is a generalization
of the existence of a positive invariant distribution.

\begin{definition} Fix $c\ge 1$.
Given a Markov chain driven by a sequence of Markov kernels $(K_i)_1^\infty$,
we say  that a probability measure $\pi$ is $c$-stable (for $(K_i)_1^\infty$)
if there exists a positive measure $\mu_0$
such that the sequence $\mu^n=\mu_0K_{0,n}$ satisfies
$$  c^{-1}\pi\le \mu_n\le c\pi.$$
When such a measure $\pi$ exists, we say that $(K_i)_1^\infty$ is 
\index{stable}$c$-stable.
\end{definition}

\begin{example} Let $K$ be an irreducible aperiodic kernel.
Then the chain driven by $K$ is $1$-stable. Indeed, it admits a positive invariant measure $\pi$ and $\pi K^n=\pi$.
Further, for any probability measure $\mu_0$
with $\|(\mu_0/\pi)-1\|_\infty\le \epsilon$, the sequence 
$\mu_n=\mu_0K^n$, $n=1,2,\dots$, satisfies
$ (1-\epsilon)\pi\le \mu_n\le (1+\epsilon)\pi$.
Indeed, in the space of signed measures, the linear map
$\mu\mapsto \mu K$ is a contraction for the distance
$d(\mu,\nu)=\|(\mu/\pi)-(\nu/\pi)\|_\infty$.
\end{example}

In the next definition, we consider the notion of $c$-stability
for a family $\mathcal Q$ of Markov kernels on a fixed state space.
This definition is of interest even in the case when $\mathcal Q=\{Q_1,Q_2\}$
is a pair.

\begin{definition} Fix $c\ge 1$.
Given a set $\mathcal Q$ of Markov kernels on a fixed state space,
we say  that a probability measure $\pi$ is a $c$-stable measure
for $\mathcal Q$
if there exists a positive measure $\mu_0$
such that for any choice of sequence $(K_i)_1^\infty$ in $\mathcal Q$,
the sequence $\mu_n=\mu_0K_{0,n}$ satisfies
$$  c^{-1}\pi\le \mu_n\le c\pi.$$
When such a measure $\pi$ exists, we say that $\mathcal Q$ is $c$-stable.
\end{definition}

\begin{example} Assume the state space is a group $G$ and let $\mathcal Q$
be the set of all Markov kernels $Q$ such that $Q(zx,zy)=Q(x,y)$ for all  $x,y,z\in G$.
This set is $1$-stable with $1$-stable measure $u$, the uniform measure on $G$.
\end{example}

\begin{example} On the two-point space, a finite set $\mathcal Q$
of Markov kernels is $c$-stable if and only if it contains no pairs
$\{Q_1,Q_2\}$ with
$Q_i=\left(\begin{array}{cc}a_i&1-a_i\\ 1-b_i& b_i\end{array}\right)$
such that $Q_1\neq Q_2$, $a_1=0$, $b_2=0$. This condition is clearly necessary.
It is not immediately obvious that it is sufficient.
See \cite{SZ3}.
\end{example}

\begin{remark} Consider the problem of deciding whether or not a pair
$\mathcal Q=\{Q_1,Q_2\}$ of two irreducible ergodic
Markov kernels with
invariant measure $\pi_1,\pi_2$, respectively, is $c$-stable. This can be pictured by
considering a rooted infinite binary tree with edges  labeled
$Q_1$(=left) and $Q_2$(=right)
as on Figure \ref{fig-tree}. Obviously, any sequence $(K_i)_1^\infty$
with $K_i\in \mathcal Q$ corresponds uniquely to an end $\omega\in \Omega$
where $\Omega$ denotes the set of the ends the tree.
Given an initial measure $\mu_0$ (placed at the root), the measure
$\mu^\omega_n=\mu_0K_{0,n}$ is obtained by following  $\omega$ from
the root down to level $n$. Thus, for each choice of $\mu_0$, we obtain a tree with
vertices labeled with measures.

\begin{figure}[h]

\begin{center}
\caption{The $Q_1,Q_2$ tree \label{fig-tree}} \vspace{.05in}
\end{center}

\begin{picture}(300,100)(-30,0)

\put(150,110){\makebox(0,0){$\mu_0$?}}

\put(150,100){\circle*{4}}
\put(150,100){\line(-2,-1){40}}\put(150,100){\line(2,-1){40}}
\put(120,95){\makebox(0,0){$Q_1$}}
\put(175,95){\makebox(0,0){$Q_2$}}
\put(110,80){\circle*{2}} \put(190,80){\circle*{2}}
\put(110,80){\line(-1,-1){20}}\put(110,80){\line(1,-1){20}}
\put(90,72){\makebox(0,0){$Q_1$}}\put(130,72){\makebox(0,0){$Q_2$}}
\put(190,80){\line(-1,-1){20}}\put(190,80){\line(1,-1){20}}
\put(90,60){\circle*{2}} \put(210,60){\circle*{2}}
\put(130,60){\circle*{2}} \put(170,60){\circle*{2}}
\put(90,60){\line(-1,-2){10}}\put(210,60){\line(1,-2){10}}
\put(130,60){\line(-1,-2){10}}\put(170,60){\line(1,-2){10}}
\put(90,60){\line(1,-2){10}}\put(210,60){\line(-1,-2){10}}
\put(130,60){\line(1,-2){10}}\put(170,60){\line(-1,-2){10}}
\put(197,50){\makebox(0,0){$Q_1$}}\put(225,50){\makebox(0,0){$Q_2$}}
\put(75,50){\makebox(0,0){$Q_1$}}
\put(80,40){\circle*{2}} 
\multiput(150,40)(0,-4){5}{\line(0,-1){2}}\put(150,20){\vector(0,-1){2}}
\multiput(70,0)(4,0){41}{\line(1,0){2}}
\put(148,3){\makebox{$\Omega$}}\put(70,3){\makebox{$\pi_1$}}
\put(226,3){\makebox{$\pi_2$}}
\end{picture}

\end{figure}
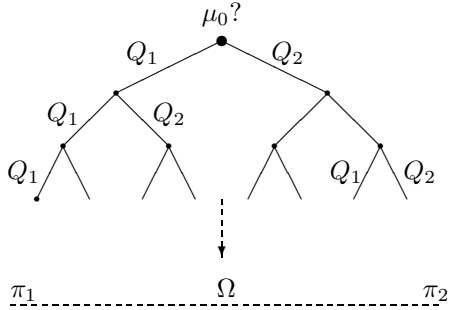
The question of $c$-stability is the problem of finding an
initial measure $\mu_0$ which, in some sense, minimizes the variations
among the $\mu^\omega_n$'s. At the left-most and right-most ends $\omega_1$,
$\omega_2$, we get $\mu^{\omega_i}_n\ra \pi_i$. Note that, if $Q_1,Q_2$
share the same invariant measure $\pi_1=\pi_2=\pi$, then the choice $\mu_0=\pi$
yields a tree all of whose vertices are labeled by $\pi$. The existence of
a $c$-stable measure $\mu_0$ can be viewed as a weakening of this.
The difficulty is that the existence of an invariant measure and
thus the equality between $\pi_1$ and $\pi_2$ can be viewed as
an algebraic property whereas there seems to be no algebraic tools to study
$c$-stability.
\end{remark}

\subsection{Simple results and examples} \label{FSS}
We are interested in finding  conditions on the individual kernels
$K_i$ of a sequence $(K_n)_1^\infty$ that imply merging. 
This is not obvious even if we consider
the very special case when all the $K_i$'s are drawn from a
finite set of kernels
$\mathcal Q=\{Q_0,\dots,Q_m\}$ or even from a pair 
$\mathcal Q=\{Q_0,Q_1\}$.
\begin{itemize}
\item Suppose that $Q_0,Q_1$ are
irreducible and aperiodic. Does it imply any sequence $(K_i)_1^{\infty}$
drawn from $\mathcal Q=\{Q_0,Q_1\}$ is merging?
\end{itemize}
The answer is no.
Let $\pi_0$ be the invariant measure of $Q_0$ and let
$Q_1=Q^*_0$  be the adjoint of $Q_0$ on $\ell^2(\pi_0)$. If $(Q_0,\pi_0)$
is not reversible (i.e., $Q_0$ is not self-adjoint on $\ell^2(\pi_0)$)
then it is possible that $Q_0Q_0^*$ is not irreducible. When $Q_0Q_0^*$
is not irreducible, the sequence  $K_i=Q_{i\mod 2}$ is not merging.
\begin{itemize}
\item Suppose that $Q_0,Q_1$ are reversible,
irreducible and aperiodic. Does it imply any sequence $(K_i)_1^{\infty}$
drawn from $\mathcal Q=\{Q_0,Q_1\}$ is merging in relative-sup?
\end{itemize}
The answer is no, even on the two point space!
On the two point space, $\mathcal Q=\{Q_0,Q_1\}$ is merging in total
variation as long as $Q_0,Q_1$ are irreducible aperiodic but relative
sup merging fails for the irreducible aperiodic pairs of the type
$$Q_0=\left(\begin{array}{cc} 0&1\\ 1-a&a\end{array} \right),
Q_1= \left(\begin{array}{cc} b&1-b\\ 1&0\end{array} \right),$$
with $0<a,b<1$. See \cite{SZ3}.

The following examples are instructive. 
\begin{example}
On $S=\{1,\dots,5\}$
consider the reversible kernels $Q_0,Q_1$
corresponding to the graphs in Figure \ref{fig0} (all edges have weight $1$).
Consider the sequence $K_i=Q_{i\mod 2}$ so that
$K_1=Q_1, K_2=Q_0, K_3=Q_1,\dots$.
If, at an even time $n=2\ell$,
the chain is at states $2$ or $5$ then from that time on, the chain will
be in $\{2,5\}$ at even times and in $\{3,4\}$ at odd times.  In this example,
the chain driven by $(K_i)_1^\infty$ is merging in total variation
but is not merging in relative-sup.

\begin{figure}[h]

\begin{center}
\caption{A five-point example} 
\vspace{.05in}  
\label{fig0}
\end{center}

\begin{picture}(300, 50)(0,0)
\put(60,40){\makebox{$Q_0$}}
\put(50,25){\circle*{3}} \put(42,25){\circle{15}}
\put(49,14){\makebox{$1$}}\put(52,25){\line(1,0){20}}
\put(75,25){\circle*{3}}\put(72,16){\makebox{$2$}}
\put(100,50){\circle*{3}} \put(96,53){\makebox{$3$}}
\put(77,27){\line(1,1){20}}
\put(100,0){\circle*{3}} \put(97,-9){\makebox{$4$}}
\put(77,23){\line(1,-1){20}}
\put(125,25){\circle*{3}}\put(123,15){\makebox{$5$}}
\put(123,27){\line(-1,1){20}}\put(123,23){\line(-1,-1){20}}

\put(150,0){\put(60,40){\makebox{$Q_1$}}
\put(50,25){\circle*{3}} \put(42,25){\circle{15}}
\put(49,14){\makebox{$1$}}\put(52,25){\line(1,0){20}}
\put(75,25){\circle*{3}}\put(72,16){\makebox{$3$}}
\put(100,50){\circle*{3}} \put(96,53){\makebox{$2$}}
\put(77,27){\line(1,1){20}}
\put(100,0){\circle*{3}} \put(97,-9){\makebox{$5$}}
\put(77,23){\line(1,-1){20}}
\put(125,25){\circle*{3}}\put(123,15){\makebox{$4$}}
\put(123,27){\line(-1,1){20}}\put(123,23){\line(-1,-1){20}}
}
\end{picture}
\end{figure}
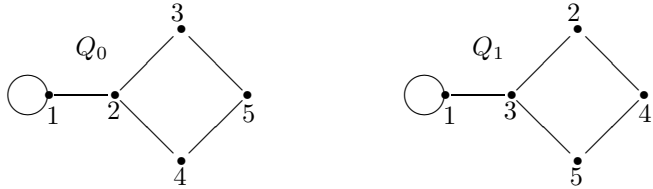
\end{example}

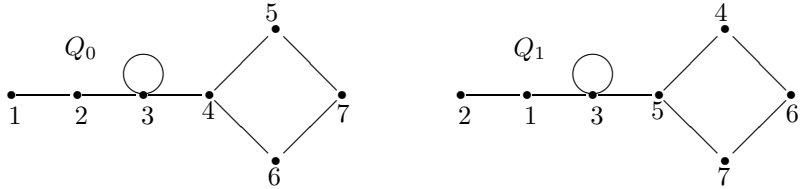
\begin{figure}[h]

\begin{center}
\caption{A seven-point example} 
\vspace{.05in}\label{fig1}
\end{center}

\begin{picture}(300, 50)(-10,0)
\put(20,40){\makebox{$Q_0$}}
\put(0,25){\circle*{3}}
\put(-1,14){\makebox{$1$}}
\put(2,25){\line(1,0){20}}
\put(25,25){\circle*{3}}
\put(24,14){\makebox{$2$}}
\put(27,25){\line(1,0){20}}
\put(50,25){\circle*{3}} \put(50,33){\circle{15}}
\put(49,14){\makebox{$3$}}
\put(52,25){\line(1,0){20}}
\put(75,25){\circle*{3}}\put(72,16){\makebox{$4$}}
\put(100,50){\circle*{3}} \put(96,53){\makebox{$5$}}
\put(77,27){\line(1,1){20}}
\put(100,0){\circle*{3}} \put(97,-9){\makebox{$6$}}
\put(77,23){\line(1,-1){20}}
\put(125,25){\circle*{3}}\put(123,15){\makebox{$7$}}
\put(123,27){\line(-1,1){20}}\put(123,23){\line(-1,-1){20}}

\put(170,0){ \put(20,40){\makebox{$Q_1$}}
\put(0,25){\circle*{3}}
\put(-1,14){\makebox{$2$}}
\put(2,25){\line(1,0){20}}
\put(25,25){\circle*{3}}
\put(24,14){\makebox{$1$}}
\put(27,25){\line(1,0){20}}
\put(50,25){\circle*{3}} \put(50,33){\circle{15}}
\put(49,14){\makebox{$3$}}
\put(52,25){\line(1,0){20}}
\put(75,25){\circle*{3}}\put(72,16){\makebox{$5$}}
\put(100,50){\circle*{3}} \put(96,53){\makebox{$4$}}
\put(77,27){\line(1,1){20}}
\put(100,0){\circle*{3}} \put(97,-9){\makebox{$7$}}
\put(77,23){\line(1,-1){20}}
\put(125,25){\circle*{3}}\put(123,15){\makebox{$6$}}
\put(123,27){\line(-1,1){20}}\put(123,23){\line(-1,-1){20}}}
\end{picture}
\end{figure}

\begin{example}\label{ex-mod}
The kernels depicted  in Figure \ref{fig1} 
yield an example where  total variation
(hence, a fortiori, relative-sup) merging fails.
In this example, the sequence $(K_i)_1^\infty$ with $K_i=Q_{i\mod 2}$
fails to be merging in total variation because the chain will eventually
end up  oscillating
either between $2$ and $1$, or between  $\{4,7\}$ and $\{5,6\}$, with a
preference for one or the other depending on the starting distribution $\mu_0$.
\end{example}

Let us give two simple results concerning merging.
\begin{proposition} \label{pro1}
Assume that, for each $i$,  there exists a state $y_i$ and a real
$\epsilon_i\in (0,1)$ such that
$$\forall\, x,\;\;K_i(x,y_i)\ge \epsilon_i.$$
If $\sum_i\epsilon_i=\infty$ then the sequence $(K_i)_1^\infty$ is merging
in total variation. If, in addition, each $K_i$ is irreducible
then the sequence $(K_i)_1^\infty$ is also merging
 in relative-sup.
\end{proposition}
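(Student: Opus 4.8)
The plan is to treat total-variation merging with Dobrushin's contraction coefficient, and to obtain relative-sup merging from that estimate together with a Doeblin-type splitting of the $K_i$.

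\emph{Total variation.} For a Markov kernel $K$ on the finite state space $S$, put $c(K)=1-\min_{x,x'}\sum_{y}\min\{K(x,y),K(x',y)\}\in[0,1]$. I would use the two classical facts that $\|\mu K-\nu K\|_{\mathrm{TV}}\le c(K)\,\|\mu-\nu\|_{\mathrm{TV}}$ for probability measures $\mu,\nu$, and that $c(KK')\le c(K)c(K')$. The hypothesis $K_i(\cdot,y_i)\ge\epsilon_i$ gives $\sum_y\min\{K_i(x,y),K_i(x',y)\}\ge\min\{K_i(x,y_i),K_i(x',y_i)\}\ge\epsilon_i$ for all $x,x'$, hence $c(K_i)\le 1-\epsilon_i$. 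With $\mu_n=\mu_0K_{0,n}$ and $\mu'_n=\mu'_0K_{0,n}$ this yields
$$\|\mu_n-\mu'_n\|_{\mathrm{TV}}\le c(K_{0,n})\le\prod_{i=1}^n c(K_i)\le\prod_{i=1}^n(1-\epsilon_i)\le\exp\Big(-\sum_{i=1}^n\epsilon_i\Big),$$
which tends to $0$ as $n\to\infty$ because $\sum_i\epsilon_i=\infty$; since the bound is uniform in $\mu_0,\mu'_0$ it even gives $T_1(\epsilon)\le\inf\{n:\sum_{i\le n}\epsilon_i\ge\log(1/\epsilon)\}$.

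\emph{Relative-sup.} Write each $K_i=\epsilon_i\Pi_i+(1-\epsilon_i)R_i$, where $\Pi_i$ has every row equal to $\delta_{y_i}$ and $R_i(x,y)=\bigl(K_i(x,y)-\epsilon_i\delta_{y_i}(y)\bigr)/(1-\epsilon_i)$, which the hypothesis makes a genuine Markov kernel. Expanding $K_{0,n}=\prod_{i=1}^n\bigl(\epsilon_i\Pi_i+(1-\epsilon_i)R_i\bigr)$ and grouping the terms according to the largest index $j$ carrying a $\Pi$-factor — so that, the rows of $\Pi_j$ being constant, the product of everything to its right collapses to a single probability vector $\nu_j:=\delta_{y_j}R_{j+1}\cdots R_n$ independent of the starting point — gives
$$K_{0,n}(x,\cdot)=\beta_n\,(R_1\cdots R_n)(x,\cdot)+\sum_{j=1}^n\alpha_j\,\nu_j,\qquad \beta_n=\prod_{i=1}^n(1-\epsilon_i),\quad \alpha_j=\epsilon_j\!\!\prod_{i=j+1}^n(1-\epsilon_i),$$
with $\beta_n+\sum_{j\le n}\alpha_j=1$ and $\sum_j\alpha_j\nu_j$ not depending on $x$. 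Consequently $\mu'_n-\mu_n=\beta_n(\mu'_0-\mu_0)R_1\cdots R_n$ and $\mu_n(x)\ge\sum_{j\le n}\alpha_j\nu_j(x)$, so $|\mu'_n(x)-\mu_n(x)|\le\beta_n$ and, using $\alpha_j/\beta_n=\epsilon_j/\beta_j\ge\epsilon_j$,
$$\Bigl|\frac{\mu'_n(x)}{\mu_n(x)}-1\Bigr|\le\frac{\beta_n}{\sum_{j\le n}\alpha_j\nu_j(x)}=\frac{1}{\sum_{j\le n}(\epsilon_j/\beta_j)\,\nu_j(x)}\le\frac{1}{\sum_{j\le n}\epsilon_j\,\nu_j(x)}.$$
Thus relative-sup merging reduces to showing $\Sigma_n(x):=\sum_{j\le n}\epsilon_j\,\nu_j(x)\to\infty$ uniformly in $x\in S$.

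\textbf{Main obstacle.} Since $\sum_{x}\Sigma_n(x)=\sum_{j\le n}\epsilon_j\to\infty$, the whole point is that this divergent mass must be spread \emph{uniformly} over $S$ — equivalently, $K_{0,n}$ must become entrywise positive for large $n$ with a lower bound not too small compared to $\beta_n$. This is exactly the step that must invoke irreducibility of the $K_i$, and it is the delicate part: a product of irreducible kernels need not be irreducible, so the argument genuinely needs the interplay between irreducibility and the hub condition, and one must be attentive to what the hypotheses actually deliver. The route I would try is to use that, by the hub property, from every state the chain sits at $y_i$ at time $i$ with probability $\ge\epsilon_i$, and then, using irreducibility of the subsequent kernels together with the fact that every hub is reachable from everywhere, to argue that the set of states carrying positive mass issued from a hub cannot stay inside a fixed proper subset and must, within boundedly many (in terms of $|S|$) further steps, charge all of $S$; summing such contributions over a divergent family of disjoint time windows should then force $\Sigma_n(x)\to\infty$ uniformly. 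Turning this reachability picture into an honest quantitative bound, uniform in $x$ and in the arbitrary sequence $(K_i)$, is the technical core of the proof; the total-variation half, by contrast, is essentially automatic.
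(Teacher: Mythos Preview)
Your total-variation argument is correct and is exactly the Dobrushin/Doeblin contraction the paper has in mind.

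For relative-sup merging, your decomposition is valid, but the reduction to $\Sigma_n(x)\to\infty$ discards too much. Take $S=\{0,1\}$ with $K_i(x,0)=\epsilon_i$ and $K_i(x,1)=1-\epsilon_i$ for all $x$ and $i$ (so $y_i=0$). Each $K_i$ has all entries positive, hence is irreducible, and relative-sup merging is trivial since $\mu_n$ is already independent of $\mu_0$ for $n\ge1$. Yet the residual kernels satisfy $R_i(x,1)=1$ for all $x$, so $\nu_j=\delta_1$ for every $j<n$ and $\Sigma_n(0)=\epsilon_n$, which need not diverge. Your sharper intermediate bound $\beta_n/\sum_j\alpha_j\nu_j(x)$ \emph{does} tend to $0$ in this example, but you abandoned it. More seriously, your proposed repair appeals to the irreducibility of the $K_i$ to spread the mass carried by $\nu_j$, but $\nu_j=\delta_{y_j}R_{j+1}\cdots R_n$ is built from the residual kernels $R_i$, which need not be irreducible even when $K_i$ is: the Doeblin split can excise precisely the entries that made $K_i$ irreducible. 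So the ``reachability within $|S|$ steps'' picture you sketch does not apply to the objects actually appearing in your bound.

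The paper takes an entirely different route, pointing to a singular-value argument in \cite{SZ3}: one views each $K_i$ as an operator $\ell^2(\mu_i)\to\ell^2(\mu_{i-1})$ and controls the second singular value $\sigma_i$, feeding this into the general estimate of Theorem~\ref{th-sing}. Irreducibility of $K_i$ enters through the spectral gap of $K_i^*K_i$ rather than through a combinatorial reachability argument, and this sidesteps the residual-kernel issue completely. If you want to salvage the Doeblin approach, a natural modification is to split only along a sparse subsequence of times, so that between split times you retain genuine products of the original $K_i$'s whose irreducibility you can then exploit; but turning this into a uniform quantitative bound still requires real work and is not what the paper does.
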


 \begin{proof} For total variation, this can be proved by a
well-known Doeblin's coupling argument (see, e.g., \cite{DMR,SZ3})
and irreducibility of the kernels is not needed. Of course, the
mass might
ultimately  concentrate 
on a fraction of the state space.

Merging in relative-sup is a bit more subtle and irreducibility is needed
for that conclusion to hold (even in the time homogeneous case).
A proof using singular values can be found in \cite{SZ3}.
\end{proof}

\begin{remark}Under the much stronger hypothesis
$\forall\, x,y,\;\;K_i(x,y)\ge \epsilon_i>0 $, one gets an immediate
control of any sequence $\mu_n=\mu_0K_{0,n}$, $n=1,2,\dots$, in the form
$$\forall\,z,\;\; \epsilon_n\le \min_{x,y}\{K_n(x,y)\}\le \mu_n(z)\le
\sup_{x,y}\{K_n(x,y)\}\le 1-(N-1)\epsilon_n$$
where $N$ is the size of the state space.
\end{remark}
\begin{remark} \label{rem-block}
The hypothesis $\exists\,y_i,\forall\, x,\;\;K_i(x,y_i)\ge \epsilon_i>0$,
is obviously too strong in many cases but it can often be applied
to study a time inhomogeneous chain $(K_i)_1^\infty$ by grouping terms
and considering the sequence $Q_i=K_{n_i,n_{i+1}}$ for an  appropriately
chosen increasing sequence  $n_i$. In the simplest case,
for a given sequence $(K_i)_1^\infty$, one seeks $\epsilon\in(0,1)$ and
an integer $m$ such that
$K_{\ell m, \ell m+m}(x,y)\ge \epsilon$ for all $x,y, \ell$. When such
a lower bound holds, one concludes that
(1) the chain is merging in total variation and relative-sup and (2)
there exists $c\in(0,1)$ such that for any starting measure $\mu_0$
and $n$ large enough, the measures $\mu_n=\mu_0K_{0,n}$ satisfy
$c\le \mu_n(z) \le  1-c$. However, this type of argument is bound to yield 
very  poor quantitative results in most cases.
\end{remark}

For the next result, recall that an adjacency matrix $A$ is a matrix whose entries are either $0$ or $1$.
\begin{proposition} \label{pro2}
On a finite state space let $(K_i)_1^\infty$ be a
sequence of Markov kernels.  Assume that:
\begin{enumerate}
\item  (Uniform irreducibility)
 There exist  an $\ell$, $\epsilon\in (0,1)$ and
adjacency matrices
$(A_i)_1^\infty$, such that,
$\forall\,i,x,y,\;\;   A^\ell_i(x,y)>0 \mbox{ and }
K_i(x,y) \ge \epsilon A_i(x,y) .$
\item (Uniform laziness) There exists $\eta\in (0,1)$ such that,
$\forall\, i,x$, $K_i(x,x)\ge \eta$.
\end{enumerate}
Then the chain driven by $(K_i)_1^\infty$ is merging in total variation
and relative-sup norm. Moreover, there exists $n_0$ and $c\in(0,1)$ such
that for any starting distribution $\mu_0$, all $n\ge n_0$ and all $z$,
$\mu_n=\mu_0K_{0,n}$  satisfies $\mu_n(z)\in (c,1-c)$.
\end{proposition}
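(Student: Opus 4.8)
The plan is to reduce the whole statement to Proposition~\ref{pro1} applied to a sequence of ``block kernels'' that enjoy a \emph{uniform} Doeblin lower bound. First I would merge the two hypotheses into one. Let $N$ be the size of the state space, and let $\tilde A_i$ be the $0/1$ matrix obtained from $A_i$ by setting all diagonal entries equal to $1$. The laziness hypothesis then gives $K_i(x,y)\ge \delta\,\tilde A_i(x,y)$ for all $i,x,y$, with $\delta=\min\{\epsilon,\eta\}\in(0,1)$, while $\tilde A_i\ge A_i$ entrywise forces $\tilde A_i^\ell(x,y)>0$ for all $x,y$; in particular each $\tilde A_i$ is irreducible, equivalently has no proper nonempty subset of states that is closed under its edge relation.

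The key step --- the one I expect to be the main obstacle --- is a purely combinatorial ``reachability grows'' lemma: for every $n$, the product $\tilde A_{n+1}\tilde A_{n+2}\cdots \tilde A_{n+m}$ has all entries strictly positive as soon as $m\ge N-1$. To prove it, fix $x$ and set $S_0=\{x\}$ and $S_j=\{y:(\tilde A_{n+1}\cdots\tilde A_{n+j})(x,y)>0\}$. Since every $\tilde A_i$ has loops, $S_{j-1}\subseteq S_j$; and as long as $S_{j-1}\ne V$, the absence of a proper closed set for $\tilde A_{n+j}$ (a consequence of $\tilde A_{n+j}^\ell>0$) yields an edge from $S_{j-1}$ to its complement, whence $|S_j|>|S_{j-1}|$. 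Therefore $|S_j|\ge\min\{j+1,N\}$, so $S_{N-1}=V$, and a fortiori $S_m=V$ for $m\ge N-1$. Fixing $m=N-1$ and using $K_i\ge\delta\tilde A_i$ on each of the $m$ factors, we get that for every $a\ge 0$ and all $x,y$, $K_{a,a+m}(x,y)\ge \delta^{\,m}=:\beta>0$. In other words every block kernel $Q_j:=K_{(j-1)m,\,jm}$ has all entries $\ge\beta$, and hence is irreducible.

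From here the three conclusions are essentially bookkeeping. Applying Proposition~\ref{pro1} to $(Q_j)_1^\infty$ --- taking any state as $y_j$ with $\epsilon_j=\beta$, so that $\sum_j\epsilon_j=\infty$ --- gives $\|\mu_{Jm}-\mu'_{Jm}\|_{\mathrm{TV}}\to0$ for any two initial distributions $\mu_0,\mu'_0$; since total variation does not increase under a Markov kernel, $\|\mu_n-\mu'_n\|_{\mathrm{TV}}=\|\mu_{Jm}K_{Jm,n}-\mu'_{Jm}K_{Jm,n}\|_{\mathrm{TV}}\to0$ for all $n$ (with $J=\lfloor n/m\rfloor$), which is total variation merging. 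For the pointwise bounds, set $n_0:=m$: for $n\ge n_0$ write $\mu_n=\mu_{n-m}K_{n-m,n}$, so the block bound yields $\mu_n(z)\ge\beta$ for every $z$, hence also $\mu_n(z)=1-\sum_{w\ne z}\mu_n(w)\le 1-(N-1)\beta$; thus $\mu_n(z)\in(c,1-c)$ with $c=\beta/2$. Finally, for $n\ge m$, $\bigl|\mu'_n(z)/\mu_n(z)-1\bigr|=|\mu'_n(z)-\mu_n(z)|/\mu_n(z)\le 2\,\|\mu_n-\mu'_n\|_{\mathrm{TV}}/\beta\to0$ uniformly in $z$, giving relative-sup merging. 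The only genuinely delicate point is the reachability lemma of the second paragraph; the trivial case $N=1$ should be noted separately, and of course laziness is what makes the sets $S_j$ monotone --- without it the product of adjacency matrices can oscillate.
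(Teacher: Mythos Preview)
Your proof is correct and follows essentially the same route as the paper: establish a uniform Doeblin-type lower bound $K_{n,n+m}(x,y)\ge(\min\{\epsilon,\eta\})^{N-1}$ for blocks of length $m$ comparable to the state-space size (the paper takes $m=N$, you take $m=N-1$), then invoke Proposition~\ref{pro1} and the blocking argument of Remark~\ref{rem-block}. Your write-up simply fills in the details the paper defers to \cite{SZ3}, and your derivation of relative-sup merging directly from the total-variation bound together with the uniform lower bound $\mu_n(z)\ge\beta$ is a nice self-contained alternative to the singular-value argument the paper cites.
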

\begin{proof} Let $N$ be the size of the state space.
Using (1)-(2), one can show (see \cite{SZ3}) that
$K_{n,n+N}(x,y)\ge  (\min\{\epsilon,\eta\})^{N-1}$. The desired result
follows from Proposition \ref{pro1} and Remark \ref{rem-block}.
\end{proof}
Note that this argument  can only give very poor quantitative results!

\section{A short review of the literature}

The largest body of literature concerning time inhomogeneous Markov processes
come, perhaps, from the analysis of Patial Differential Equations 
where time dependent coefficients are allowed. The book \cite{SV} can serve as a
basic reference. Unfortunately, it seems that 
the results developed in that context are local in nature and 
are not very relevent to the
quantitative problems we are interested in. The literature on (finite) 
time inhomogeneous Markov chains can be organized under three basic headings:
Weak ergodicity, asymptotic structure, and products of stochastic matrices.
We now briefly review each of these directions.

\subsection{Weak ergodicity} One of the earliest references concerning 
the asymptotic  behavior of time inhomogeneous chains is a note of Emile Borel
\cite{Bo} where he discusses time inhomogeneous card shufflings. 
In the context of general time inhomogeneous chains on finite 
state spaces, {\em weak ergodicity}, which we call {\em
total variation merging}, i.e., the tendency to forget the distant past,
was introduced in \cite{Kol} and 
is the main subject of \cite{Haj}. See also \cite{Cohn2} and 
the reference to the work of Doeblin given there.
A sample of additional old and not so old references 
in this direction is \cite{Gri,Io,NS,Pa,Rho,Sen}.  An historical review 
is given in \cite{Sen2}. The main tools developed in these references
to prove weak ergodicity are  the use of ergodic coefficients and couplings. 
A modern perspective, close in spirit to our interests, is in \cite{DG,DLM,DMR}.
It may be worth pointing out that, by design, ergodic coefficients mostly 
capture some asymptotic properties and are not well 
suited for quantitative results, even in the time homogeneous case.

\subsection{Asymptotic structure} One of the basic results in the theory of 
time homogeneous finite Markov chains describes the decomposition of the state 
space into non-essential (or transient) states, essential classes and 
periodic subclasses. It turns out that, perhaps surprisingly,
there exists a completely general  version of this result for time 
inhomogeneous chains. This result is rather more subtle than its
time homogeneous counterpart. Sonin \cite[Theorem 1]{Son} calls it the
\index{Decomposition-Separation Theorem}Decomposition-Separation Theorem 
and  reviews its history
which starts with a paper of Kolmogorov \cite{Kol}, with further important 
contributions by Blackwell \cite{Bl}, Cohn \cite{Cohn} and Sonin \cite{Son}.

Fix a sequence $(K_n)_1^\infty$ of Markov kernels on a finite state space $\Omega$.
The Decomposition-Separation Theorem yields a sequence 
$(\{S^k_n,k=0,\dots c\})_{n=1}^\infty$ of partitions of $\Omega$
so that: (a) With probability one, the trajectories of any 
Markov chain $(X_n)$ driven by $(K_n)_1^\infty$ will, after a 
finite number of steps, enter one of the sequence $S^k=(S^k_n)_{n=1}^\infty$, 
$k=1,\dots ,c$, and stay there forever. Further, for each $k$, 
$$\sum_{n=1}^\infty \mathbf P(X_n\in S^k_n; X_{n+1}\notin S^k_{n+1})+
\mathbf P(X_n\notin S^k_n; X_{n+1}\in S^k_{n+1})<\infty.$$

(b) For each $k=1,\dots,c$, and 
for any two Markov chains $(X^1_n)_1^\infty,(X^2_n)_1^\infty$ driven by 
$(K_n)_1^\infty$ such that
$\lim_{n\ra \infty}\mathbf P(X^i_n\in S^k_n)>0$, and any sequence 
of states $x_n\in S^k_n$,
$$\lim_{n\ra \infty} 
\frac{\mathbf P(X^1_n=x_n| X_n^1\in S^k_n)}{\mathbf P(X^2_n=x_n| X_n^2\in S^k_n)}=1.$$

The sequence $(S^0_n)_1^\infty$ describes ``non-essential states'' and 
a chain is weakly ergodic (i.e., merging in total variation) 
if and only if $c=1$, i.e., there is only one essential class.
We refer the reader to \cite{Son} for a detailled discussion and 
connections with other problems.  

The Decomposition-Separation Theorem
can be illustrated  (albeit, in a rather trivial way) 
using Example \ref{ex-mod} of Figure \ref{fig1} above. 
In this case, $\Omega=\{1,\dots,7\}$. We 
consider the sequence of partitions $(S_n^k)$, $k\in\{0,1,2\}$, where
$S_{2n}^0=\{1,3,5,6\}$, $S_{2n+1}^0=\{2,3,4,7\}$, $S_{2n}^1=\{2\}$, 
$S_{2n+1}^1=\{1\}$ and  $S_{2n}^2=\{4,7\}$,  $S_{2n+1}^2=\{5,6\}$.
Any chain driven by $Q_1,Q_0,Q_1,\dots$ will eventually end up  
staying either in   $S^1_n$ or in $S^2_n$ forever.

The Decomposition-Separation Theorem is a very general result which 
holds without any hypothesis on the kernels $K_n$. We are instead 
interested in finding  hypotheses, perhaps very restrictive ones,
on the individual kernels $K_n$ that translate
into strong quantitative results concerning the merging property of the chain.

\subsection{Products of stochastic matrices}
There is a rather rich literature on the study 
of products of stochastic matrices. Recall that stochastic matrices 
are matrices with non-negative entries and  row sums equal to $1$. 
This last assumption, which breaks the row/column symmetry,  implies that 
there is significant differences between forward and backward products 
of stochastic matrices.  Given a sequence $K_i$ of stochastic matrices
The forward products form the sequence 
$$K^f_{0,n}=K_1K_2\cdots K_n,\;\; n=1,\dots,$$
whereas the backward products  form the sequence  
$$K^b_{0,n}= K_n\cdots K_2K_1,\;\;
n=1,\dots.$$   There is a crucial difference between these two sequences:
The entries $K^f_{0,n}(x,y)$ do not have any general monotonicity properties
but, for any $y$,  
$$ n\mapsto M(n,y)=\max _x\{K^b_{0,n}(x,y)\} $$
is monotone non-increasing and 
$$n\mapsto m(n,y)=\min _x\{K^b_{0,n}(x,y)\} $$
is monotone non-decreasing. These properties 
are obvious consequences of the fact that the matrices $K_i$ 
are stochastic matrices.  Of course, 
$\lim_{n\ra \infty} M(n,y)$ and $\lim_{n\ra \infty} m(n,y)$ exist for all $y$.

If, for some reason, we know that 
$$\forall\, x,x',\;\;\lim_{n\ra \infty}\sum_y| K^b_{0,n}(x,y)-K^b_{0,n}(x',y)|=0$$
then it follows that the backward products converge to a row-constant matrix 
$\Pi$, i.e.,
$$\forall\,x,x',y,\;\;\Pi(x,y)=\lim_{n\ra \infty} 
K^b_{0,n}(x,y),\;\;\Pi(x,y)=\Pi(x',y).$$
The references \cite{Haj,Kol,NS,Rho,Ste,Wol} form
a sample of old and recent works dealing with this observation.

Changing viewpoint and notation somewhat, consider all finite products of 
matrices drawn from a set $\mathcal Q$ of $N\times N$ stochastic matrices. 
For $\omega=(\dots,K_{i-1},K_i,K_{i+1},\dots)\in \mathcal Q^\mathbb Z$ 
a doubly infinite sequence of matrices and $m\le n\in \mathbb Z$,  set
$$K^\omega_{m,n}= K_{m+1}\cdots K_n , \;\;(K_{m,m}=I).$$

A stochastic matrix is called (SIA)
if its products converge to a constant row matrix.
Here, (SIA) stands for  stochastic, irreducible and  aperiodic 
although ``irreducible'' really means that the matrix has a unique 
recurrent class (transient states are allowed so that the constant row 
limit matrix may have some $0$ columns). 
A central result in this area (e.g., \cite{Ste,Wol}) is that, 
if  $\mathcal Q$ is finite and all finite 
products of matrices in $\mathcal Q$ are (SIA) then, for any  
doubly infinite sequence $\omega\in \mathcal Q^\mathbb Z$,
\begin{equation}\label{W1}
\lim_{n-m\ra \infty}\sum_y|K^\omega_{m,n}(x,y)-K^\omega_{m,n}(x',y)|=0
\end{equation}
and
\begin{equation}\label{W2}
\lim_{m\ra -\infty} K^\omega_{m,n}=\Pi^\omega_n
\end{equation}
where $\Pi^\omega_n$ is a row-constant  matrix. Let $\pi^\omega_n$
be the probability measure corresponding to the rows of row-constant matrix $\Pi^\omega_n$. 
Observe that (\ref{W1})-(\ref{W2}) imply
$$\lim_{n\ra \infty}\sum_y|K^\omega_{0,n}(x,y)-\pi^\omega_n(y)|=0.$$

The following proposition establishes some relations between these
considerations, total variation merging and stability.
\begin{proposition} Let $\mathcal Q$ be a set of $N\times N$ stochastic matrices.
Assume that $\mathcal Q$ is merging (in total variation) and
$c$-stable w.r.t.\ a positive measure $\pi$.  Then 
\begin{enumerate}
\item Any finite product $P$ of matrices in $\mathcal Q$ is 
irreducible aperiodic and its unique positive invariant measure $\pi_P$
 satisfies  $c^{-1}\pi\le \pi_P\le c\pi$.
\item  For any $\omega\in \mathcal Q^\mathbb Z$ and any $n\in \mathbb Z$, 
$\pi^\omega_n$ satisfies $c^{-1}\pi\le \pi^\omega_n\le c\pi$, i.e., 
any limit row $\pi'$ of backward products
of matrices in $\mathcal Q$ satisfies $c^{-1}\pi\le \pi'\le c\pi$. 
\end{enumerate}
\end{proposition}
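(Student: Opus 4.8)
The plan is to reduce both statements to one very simple squeeze. Write $\mu_0$ for the positive measure supplied by $c$-stability; after rescaling $\mu_0$ and $\pi$ by a common positive constant we may assume $\mu_0$ is a probability measure, so that $\mu_n=\mu_0K_{0,n}$ is a probability measure and $c^{-1}\pi\le\mu_n\le c\pi$ for \emph{every} sequence $(K_i)_1^\infty$ drawn from $\mathcal Q$ and every $n$; in particular $\pi$ has full support. Both $\pi_P$ in (1) and $\pi^\omega_n$ in (2) will be exhibited as limits of measures of the form $\mu_0\cdot(\text{finite product of kernels from }\mathcal Q)$, and since each such term lies in the closed set $\{\nu:c^{-1}\pi\le\nu\le c\pi\}$, so does the limit.

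For (1), given a finite product $P=Q_{i_1}\cdots Q_{i_k}$ I would use the periodic sequence obtained by repeating the block $Q_{i_1},\dots,Q_{i_k}$, so that $K_{0,mk}=P^m$. The elementary point is that for each fixed $y$ the quantity $\max_xP^m(x,y)$ is non-increasing in $m$ while $\min_xP^m(x,y)$ is non-decreasing, because $P^{m+1}(x,y)$ is a convex combination of the entries $P^m(z,y)$; total-variation merging of this periodic sequence makes the two monotone limits coincide, so $P^m$ converges to a row-constant matrix $\Pi_P$ whose common row $\pi_P$ is a probability measure, and letting $m\to\infty$ in $P^{m+1}=P^mP$ gives $\Pi_P=\Pi_PP$, i.e.\ $\pi_PP=\pi_P$. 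Letting $m\to\infty$ in $c^{-1}\pi\le\mu_0P^m\le c\pi$ and using $\mu_0P^m\to\mu_0\Pi_P=\pi_P$ yields $c^{-1}\pi\le\pi_P\le c\pi$; since $\pi$ has full support, so does $\pi_P$, hence $\Pi_P$ has strictly positive entries and therefore some power $P^{m_0}$ does too, i.e.\ $P$ is primitive, that is, irreducible and aperiodic. Finally, any positive invariant $\nu$ satisfies $\nu=\nu P^m\to|\nu|\,\pi_P$, so $\nu$ is a positive multiple of $\pi_P$; this gives the uniqueness claim in (1).

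For (2), recall that $\pi^\omega_n$ is the common row of $\Pi^\omega_n=\lim_{m\to-\infty}K^\omega_{m,n}$ with $K^\omega_{m,n}=K_{m+1}\cdots K_n$; existence of this limit is part of the framework recalled above (and, when $\mathcal Q$ is finite, follows from (1) together with the convergence result for products of (SIA) matrices, the periodic instance being exactly the statement $P^m\to\Pi_P$ used for (1)). The key observation is that once the backward products converge to a \emph{row-constant} matrix, the initial measure is irrelevant in the limit: for any probability measure $\nu$ one has $\nu K^\omega_{m,n}\to\nu\Pi^\omega_n=\pi^\omega_n$, so in particular $\mu_0K^\omega_{m,n}\to\pi^\omega_n$. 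But for each fixed $m$, $K^\omega_{m,n}=K_{m+1}\cdots K_n$ is the forward product $L_1\cdots L_{n-m}$ of the sequence $(L_i)_1^\infty$ in $\mathcal Q$ with $L_i=K_{m+i}$ for $i\le n-m$ and $L_i$ arbitrary afterwards, so $c$-stability applied to $(L_i)$ gives $c^{-1}\pi\le\mu_0K^\omega_{m,n}\le c\pi$; letting $m\to-\infty$ gives $c^{-1}\pi\le\pi^\omega_n\le c\pi$, which is (2).

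I expect the difficulty to be conceptual rather than computational. In (2) one must resist working with $\delta_x$-started backward products, about which stability says nothing, and instead use row-constancy of the limit to replace $\delta_x$ by the stability measure $\mu_0$, after which the bound is immediate; in (1) one must convert the purely qualitative merging hypothesis into genuine convergence of $P^m$, which is exactly what the monotonicity of $m\mapsto\max_xP^m(x,y)$ and $m\mapsto\min_xP^m(x,y)$ delivers. The only other point needing care is the harmless normalization of $\mu_0$ to a probability measure at the outset.
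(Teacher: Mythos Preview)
Your proof is correct and follows essentially the same route as the paper's: use $c$-stability to bound $\mu_0$ times the relevant product, use merging to pass to a row-constant limit, and deduce the bounds on $\pi_P$ (resp.\ $\pi^\omega_n$) and the primitivity of $P$ from positivity of the limit. Your version is in fact more detailed than the paper's---you make explicit the monotonicity argument that converts merging into actual convergence of $P^m$, you verify invariance and uniqueness of $\pi_P$, and you spell out the auxiliary sequence $(L_i)$ needed to apply $c$-stability to $\mu_0K^\omega_{m,n}$---but the underlying ideas are the same.
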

\begin{proof} (1) As $\mathcal Q$ is $c$-stable w.r.t.\ $\pi$, 
there exists a positive measure $\mu_0$ such that for any finite product $P$ of matrices in $\mathcal Q$ and any $n$, $c^{-1}\pi\le \mu_0P^n \le c\pi$. Since $\mathcal Q$ 
is merging, we must have $\lim_{n\ra \infty} P^n =\Pi_P$ with $\Pi_P$ 
having constant rows, call them $\pi_P$. This implies
$c^{-1}\pi\le \pi_P\le c \pi$. Since $\pi$ is positive, 
$\pi_P$ must be positive and  $\lim_{n\ra \infty} P^n =\Pi_P$ implies 
that $P$ is irreducible aperiodic. We note that (1) is, in fact, 
a sufficient condition for stability. See \cite[Prop. 4.9]{SZ3}.
Under the hypothesis that $\mathcal Q$ is merging, (1) is thus 
a necessary and sufficient condition for $c$-stability.

(2)  Fix $\omega\in \mathcal Q^\mathbb Z$. By hypothesis, on the one hand, 
there exists a positive probability measure $\mu_0$ such that
$ c^{-1} \pi\le \mu_0K^\omega_{m,n}\le c\pi$. On the other hand, merging imply
that  $\lim_{m\ra -\infty} K^\omega_{m,n}=\Pi^\omega_n$ and thus,
$\lim_{m\ra -\infty}\mu_0 K^\omega_{m,n}=\pi^\omega_n$. The desired result 
follows. \end{proof}

\subsection{Product of random stochastic matrices}
For pointers to the literature on products of random stochastic matrices
and Markov chains in a random environment, see, e.g., 
\cite{Cog,Coh,Ros,Tak} and the references therein. 
We end this section 
with short comments regarding the simplest case of products of random 
stochastic matrices, i.e.,  the case where the matrices $K_i$
form an i.i.d sequence of stochastic matrices. 
The backward and forward products
$K^b_{0,n}=K_n\cdots K_1$, $K^f_{0,n}=K_1\cdots K_n$ become random 
variables taking values in the set of all $N\times N$ stochastic matrices.
Although these two sequences of random variables have very different 
behavior as $n$ varies, $K^b_{0,n}$ and 
$K^f_{0,n}$ have the same law.  Takahashi \cite{Tak}
proves that if 
$$\forall\,x,x',\;\;\lim_{n\ra \infty} \sum_y|K^f_{0,n}(x,y)-K^f_{0,n}(x',y)|=0
\;\mbox{ almost surely}$$
then  $K^f_{0,n}$ converges in law and the limit law is that of 
the limit random variable $\lim_{n\ra \infty}K^b_{0,n}$.
Rosenblatt \cite{Ros} applies 
the theory of random walks on semigroups to show that
the Cesaro sums  $n^{-1}\sum_1^nK^f_{0,j}(x,y)$ always converge to a constant
 almost surely.  The articles \cite{Cog,Coh} discuss similar results 
under more general hypotheses on the nature of the random sequence 
$(K_i)_1^\infty$. Unfortunately, these interesting results concerning random 
environments do not shed much light on the quantitative questions 
emphasized here.

\section{Quantitative results and examples}

Informally, the question we want to focus on is the following. Let $(K,\pi)$
be an irreducible aperiodic  Markov kernel and its stationary probability
measure. Let $(K_i)_1^\infty$ be a sequence of Markov kernels so that,
for each $i$,  $K_i$ is a perturbation of $K$ with invariant measure $\pi_i$
that is a perturbation of $\pi$ (what ``perturbation'' means here
is left open on purpose). For an initial distribution $\mu_0$,
consider  the associated sequence of measures defined by
$\mu_n=\mu_0K_1\cdots K_n$, $n=1,2,\dots$.

\begin{problem} (1) Does total variation merging hold?

(2) Does relative-sup merging hold?

(3)  Does there exists $c\ge 1$ such that, for $n$ large enough,
 $$\forall\,x,\;\;\;c^{-1}  \le \frac{\mu_n(x)}{\pi(x)}\le c?$$
\end{problem}
Obviously, these questions call for quantitative results describing
the merging times,  the constant $c$ and the ``large'' time $n$ 
in terms of bounds on the allowed perturbations.

To understand what is meant by quantitative results,
it is easier to consider a family of
problems depending on a parameter representing the size and  complexity
of the problem. So, one starts with a family $(\Omega_N,K_N,\pi_N)$
of ergodic Markov kernels depending on the parameter $N$ whose mixing time
sequence   $(T_1(N,\epsilon))_1^\infty$ (say, in total variation)
is understood.
Then, for each $N$, we consider perturbations $(K_{N,i})_{i=1}^\infty$
of $K_N$ with stationary measure $\pi_{N,i}$ close to $\pi_N$ and ask if the
merging time of $(K_{N,i})_{i=1}^\infty$ can be controlled in terms of
$T_1(N,\epsilon)$.

\begin{problem} \label{Pb0}
Let $\Omega_N=\{0,\dots,N\}$.
Let $\mathcal Q_N$ be the set of all
birth and death chains  $Q$ on $V_N$ with
$Q(x,x+\epsilon)\in [1/4,3/4]$ for all $x,x+\epsilon\in V_N$,
$\epsilon\in \{-1,0,1\}$
and with reversible measure $\pi$ satisfying $1/4\le (N+1)\pi(x)\le 4$,
$x\in V_N$.
\begin{enumerate}
\item Prove or disprove that there exists
a constant $A$ independent of $N$ such that
$\mathcal Q_N$ has total variation $\epsilon$-merging time
at most $AN^2(1+\log_+ 1/\epsilon)$.
\item
Prove or disprove that there exists
a constant $A$ independent of $N$ such that
$\mathcal Q_N$ has relative-sup $\epsilon$-merging time  at most
$AN^2(1+\log_+ 1/\epsilon )$.
\item Prove or disprove that there exist constants $A,C\ge 1$,
such that, for any $N$ and any sequence $(K_i)_1^\infty
\in \mathcal Q_N$, we  have
$$\forall\,x,y\in \Omega_N,\;\;
\forall\,n\ge AN^2,\;\;   \frac{1}{C(N+1)}\le K_{0,n}(x,y)\le \frac{C}{N+1}.$$
\end{enumerate}
\end{problem}

Here the time homogeneous  model is the birth and death chain
$K_N$ with constant rates $p=q=r=1/3$ and $\pi_N=1/(N+1)$, so that
$K_N(x,y)=0$ unless $|x-y|\le 1$, $K(0,0)=K(N,N)=2/3$ and
$K(x,x)=K(x,x\pm 1)=1/3$ otherwise. Of course, it is well known that
$T_1(K_N,\epsilon)\simeq T_\infty(K_N,\epsilon)\simeq N^2(1+\log_+ (1/\epsilon))$
for small $\epsilon>0$. Problem 1.2 asks whether or not these
mixing/merging times are stable under suitable time inhomogeneous
perturbations of $K_N$ and whether or not the limiting behavior stays
comparable to that of the model chain.
To the best of our knowledge
the answer is not known and this innocent looking problem should
be taken seriously.

There appears to be only a small number of papers that attempt 
to prove quantitative results for time inhomogeneous chains.
These include \cite{DLM,DMR,Ga,Mir,MPS} and the authors' works
\cite{SZ, SZ3,SZ4,SZwave}. 
The works \cite{Ga,Mir,MPS,SZ} treat 
only examples of time inhomogeneous chains 
that admit an invariant measure. Technically, this is a very 
specific hypothesis and, indeed, these works show that many of the well 
developed techniques that have been used to study time homogeneous 
chains can be successfully applied under this hypothesis.

\subsection{Singular values}

A typical qualitative result about finite Markov chains is that
an irreducible aperiodic chain is ergodic. We do not know  of
any quantitative versions of this statement. Let $K$ be an irreducible
aperiodic  Markov kernel with stationary measure $\pi$ so that
$\mu_n=\mu_0 K^n\ra \pi$ as $n$ tends to infinity, for any starting
distribution $\mu_0$.

If $(K,\pi)$
is reversible (i.e., $\pi(x)K(x,y)=\pi(y)K(y,x)$) and if $\beta$
denotes the second largest absolute value of the
eigenvalues of $K$ acting on $\ell^2(\pi)$ then $\beta<1$ and
\begin{equation}\label{quant}
2\|\mu_n-\pi\|_{\mbox{\tiny TV}}\le \|\mu_0/\pi\|_2 \beta^n
\end{equation}
where $\|\mu_0/\pi\|_2$ is the norm of
$f_0=\mu_0/\pi$ in $\ell^2(\pi)$.
This can be considered as a quantitative result
although it involves the perhaps unknown reversible measure $\pi$.

If $(K,\pi)$ is not reversible, the inequality still holds with $\beta$
being the second largest singular value of $K$ on $\ell^2(\pi)$
(i.e., the square root of the second largest eigenvalue of $KK^*$
where $K^*$ is the adjoint of $K$ on $\ell^2(\pi)$). However,
it is then possible that $\beta=1$,  in which case the inequality fails to
capture the qualitative ergodicity of the chain.

Inequality (\ref{quant}) has an elegant generalization to the
time inhomogeneous setting. Let $(K_i)_1^\infty$ be a sequence of irreducible
Markov kernels (on a finite state space).
Fix a positive probability measure $\mu_0$
(by positive we mean here that $\mu_0(x)>0$ for all $x$) and set
$$\mu_n=\mu_0K_{0,n}.$$
In the time inhomogeneous setting, we want to compare this sequence of measures
$(\mu_n)_1^\infty$ to the sequence of measures
$(K_{0,n}(x,\cdot))_1^\infty$ describing the distribution 
at time $n$ of the chain started at an arbitrary point $x$.

To state the result, for each $i$,
consider $K_i$ as a linear operator acting from $\ell^2(\mu_i)$
to $\ell^2(\mu_{i-1})$. One easily checks that this operator is a contraction.
Its \index{singular value}singular values 
are the square roots of the eigenvalues of the operator
$P_i=K_i^*K_i: \ell^2(\mu_{i})\ra \ell^2(\mu_{i})$ where $K_i^*:
\ell^2(\mu_{i-1})\ra \ell^2(\mu_i)$ is the adjoint operator which is 
a Markov operator with
kernel
$$K^*_i(x,y)=  \frac{K_i(y,x)\mu_{i-1}(y)}{\mu_{i}(x)}.$$
We let
$$\sigma_i=\sigma(K_i,\mu_i,\mu_{i-1})$$ be the second largest
singular value of $K_i:\ell^2(\mu_i)\ra \ell^2(\mu_{i-1})$. It is the
square root
of the second
largest eigenvalue of the Markov kernel
\begin{equation}\label{Pi}
 P_i(x,y)= \frac{1}{\mu_{i}(x)}\sum_z  K_i(z,x)K_i(z,y)\mu_{i-1}(z).
\end{equation}

\begin{theorem} \label{th-sing}
With the notation introduced above, we have
$$\|K_{0,n}(x,\cdot)-\mu_n\|_{\mbox{\em \tiny TV}}\le \mu_0(x)^{-1/2}
\prod_1^n \sigma_i$$
and
$$\left|\frac{K_{0,n}(x,y)}{\mu_n(y)}-1\right|\le [\mu_0(x)\mu_n(y)]^{-1/2}
\prod_1^n \sigma_i$$
\end{theorem}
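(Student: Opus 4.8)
The plan is to treat each $K_i$ as a contraction between the weighted spaces $\ell^2(\mu_i)$ and $\ell^2(\mu_{i-1})$ and to track how the ``discrepancy function'' $f_n = K_{0,n}(x,\cdot)/\mu_n - 1$ is transported and contracted step by step. First I would record the basic observation that $\mu_n = \mu_{n-1}K_n$, so that if we write $g_n(y) = K_{0,n}(x,y)/\mu_n(y)$, then $g_n = K_n^* g_{n-1}$ where $K_n^*$ is the adjoint Markov kernel displayed in the excerpt; this is a direct computation using the formula $K^*_n(x,y) = K_n(y,x)\mu_{n-1}(y)/\mu_n(x)$ together with $\sum_y K_n^*(\cdot,y) = 1$. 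In particular each $g_n$ has $\mu_n$-mean $1$ (equivalently $f_n = g_n - 1$ has $\mu_n$-mean $0$), and $g_0 = K_{0,0}(x,\cdot)/\mu_0 = \mathbf 1_{\{x\}}/\mu_0$.

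Next I would invoke the variational characterization of the second singular value: since $P_n = K_n^* K_n$ acts on $\ell^2(\mu_n)$ as a reversible Markov kernel (reversible with respect to $\mu_n$, by \eqref{Pi}) with top eigenvalue $1$ attained at constants, its restriction to the mean-zero subspace of $\ell^2(\mu_n)$ has spectral radius $\sigma_n^2$. Because $K_n^*$ maps mean-zero functions in $\ell^2(\mu_{n-1})$ to mean-zero functions in $\ell^2(\mu_n)$ and $\langle K_n^* h, K_n^* h\rangle_{\mu_n} = \langle h, K_n K_n^* h\rangle_{\mu_{n-1}} \le \sigma_n^2 \langle h,h\rangle_{\mu_{n-1}}$ on that subspace (note $K_n K_n^*$ and $K_n^* K_n$ share the same nonzero spectrum), we get $\|f_n\|_{\ell^2(\mu_n)} \le \sigma_n \|f_{n-1}\|_{\ell^2(\mu_{n-1})}$. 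Iterating from $n$ down to $0$ yields
\[
\|f_n\|_{\ell^2(\mu_n)} \le \Big(\prod_{i=1}^n \sigma_i\Big) \|f_0\|_{\ell^2(\mu_0)}.
\]
Then I would bound $\|f_0\|_{\ell^2(\mu_0)}^2 = \sum_y \mu_0(y)(\mathbf 1_{\{x\}}(y)/\mu_0(y) - 1)^2 = 1/\mu_0(x) - 1 \le \mu_0(x)^{-1}$, giving $\|f_0\|_{\ell^2(\mu_0)} \le \mu_0(x)^{-1/2}$.

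Finally I would convert the $\ell^2(\mu_n)$ bound on $f_n$ into the two stated inequalities. For total variation, $2\|K_{0,n}(x,\cdot) - \mu_n\|_{\text{TV}} = \sum_y \mu_n(y)|f_n(y)| = \|f_n\|_{\ell^1(\mu_n)} \le \|f_n\|_{\ell^2(\mu_n)}$ by Cauchy--Schwarz (since $\mu_n$ is a probability measure), so $\|K_{0,n}(x,\cdot)-\mu_n\|_{\text{TV}} \le \frac12 \mu_0(x)^{-1/2}\prod_1^n\sigma_i$, which is even slightly stronger than claimed. For the relative-sup bound, I would use the pointwise estimate $|f_n(y)| = |K_{0,n}(x,y)/\mu_n(y) - 1| \le \mu_n(y)^{-1/2}\|f_n\|_{\ell^2(\mu_n)}$, which follows because $\mu_n(y)|f_n(y)|^2 \le \|f_n\|_{\ell^2(\mu_n)}^2$; combined with the iterated bound and $\|f_0\|_{\ell^2(\mu_0)}\le\mu_0(x)^{-1/2}$ this gives exactly $|K_{0,n}(x,y)/\mu_n(y) - 1| \le [\mu_0(x)\mu_n(y)]^{-1/2}\prod_1^n\sigma_i$.

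I expect the only genuinely delicate point to be the spectral step: one must be careful that the relevant operator norm of $K_n^*$ on mean-zero functions is $\sigma_n$ rather than $1$, which requires knowing that $K_n^*$ indeed preserves the mean-zero subspace (so that the constant eigenvector, with eigenvalue $1$, plays no role) and that $K_nK_n^*$ on $\ell^2(\mu_{n-1})$ has the same second eigenvalue $\sigma_n^2$ as $P_n = K_n^*K_n$ on $\ell^2(\mu_n)$. Both facts are standard consequences of the singular value decomposition of the contraction $K_n : \ell^2(\mu_n) \to \ell^2(\mu_{n-1})$ together with the fact that constants are the top singular vectors on both sides (using that $K_n$ and $K_n^*$ are Markov), but they deserve to be stated cleanly. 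Everything else is Cauchy--Schwarz and bookkeeping.
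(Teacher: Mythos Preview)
Your argument is correct. The paper does not actually prove Theorem~\ref{th-sing}; it simply refers the reader to \cite{DLM,SZ3} and remarks that the proof in \cite{SZ3} proceeds via singular values while \cite{DLM} avoids them. Your proof is exactly the singular-value argument alluded to: the recursion $g_n=K_n^*g_{n-1}$, the contraction $\|K_n^*\|\le\sigma_n$ on mean-zero functions (via the shared nonzero spectrum of $K_nK_n^*$ and $K_n^*K_n$), the computation $\|f_0\|_{\ell^2(\mu_0)}^2=\mu_0(x)^{-1}-1$, and the final Cauchy--Schwarz/pointwise extractions are all sound, and your observation that the total-variation bound actually carries an extra factor $\tfrac12$ is correct as well.
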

For the proof, see \cite{DLM,SZ3}. 
The proofs given in \cite{DLM} and \cite{SZ3}
are rather different in spirit, with \cite{DLM} avoiding the explicit use
of singular values. Introducing singular values allows for further refinements
and is useful for practical estimates.
See \cite{SZ,SZ3}. When coupled with the hypothesis of $c$-stability, 
the above result becomes a powerful and very applicable tool. See, e.g.,
 \cite[Theorem 4.11]{SZ3} and the examples treated in \cite{SZ3,SZ4}.
Unfortunately, proving $c$-stability is not an easy task.

A good example of application of Theorem \ref{th-sing} is the 
following  result taken from \cite{SZ3}. We refer the reader to \cite{SZ3} for 
the proof.
\begin{theorem} Fix $1<a<A<\infty$.
Let $\mathcal Q_N(a,A)$ be the set of all constant  
rate birth an death chains on $\{0,\dots,N\}$
with parameters $p,q,r$ satisfying $p/q\in [a,A]$. 
The set $\mathcal Q_N(a,A)$ is merging in relative-sup 
with relative-sup $\epsilon$-merging time bounded above by 
$$T_\infty(\epsilon)\le C(a,A)(N+\log_+1/\epsilon).$$
\end{theorem}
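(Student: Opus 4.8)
The plan is to combine Theorem~\ref{th-sing} (the singular value bound) with a uniform estimate on the product $\prod_1^n\sigma_i$ for constant rate birth-and-death chains whose ratio $p/q$ is pinned in $[a,A]$. The point is that for such a chain, the relevant second singular value $\sigma_i=\sigma(K_i,\mu_i,\mu_{i-1})$ of Theorem~\ref{th-sing} should be bounded away from $1$ by a constant depending only on $a,A$ (and in particular \emph{not} on $N$), which immediately forces geometric decay at a rate independent of $N$ and yields a merging time of order $N+\log_+1/\epsilon$ (the additive $N$ coming from the $\mu_0(x)^{-1/2}$ and $\mu_n(y)^{-1/2}$ prefactors, which are of size $O(\sqrt N)$ once one knows, via Proposition~\ref{pro2} applied after suitable blocking, that $\mu_n(x)\asymp 1/N$).

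First I would record the structural facts. A constant rate birth-and-death chain $Q$ on $\{0,\dots,N\}$ with $Q(x,x+1)=p$, $Q(x,x-1)=q$, $Q(x,x)=r$ (with the usual modification at the endpoints) is reversible with respect to $\pi_Q(x)\propto (p/q)^x$; since $p/q\in[a,A]$ this is a geometric-type measure with $c$-stable structure, and in fact $\mathcal Q_N(a,A)$ is $c$-stable for a constant $c=c(a,A)$ — this is exactly the kind of statement one needs and is where \cite{SZ3} does real work. Granting $c$-stability with respect to some reference positive measure $\pi$ (itself comparable to a fixed geometric measure with ratio in $[a,A]$, hence with $\pi(x)\asymp_{a,A} 1/N$ only near one end and exponentially small near the other — more precisely $\pi(x)\ge c_0 A^{-N}$), the sequence $\mu_n=\mu_0K_{0,n}$ built from the $c$-stable initial measure satisfies $c^{-1}\pi\le\mu_n\le c\pi$ uniformly in $n$. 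This both controls the prefactors $[\mu_0(x)\mu_n(y)]^{-1/2}$ in Theorem~\ref{th-sing} (they are at most $c\,\pi(x)^{-1/2}\pi(y)^{-1/2}\le C(a,A)\,A^{N}$, contributing $O(N)$ after taking logarithms) and, crucially, gives uniform two-sided comparability of the consecutive measures $\mu_{i-1}$ and $\mu_i$, namely $c^{-2}\le\mu_{i-1}(x)/\mu_i(x)\le c^2$.

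The heart of the matter is the bound $\sigma_i\le\rho<1$ with $\rho=\rho(a,A)$. Here $\sigma_i^2$ is the second eigenvalue of the Markov kernel $P_i$ of \eqref{Pi}. The idea is that $P_i=K_i^*K_i$ where $K_i\colon\ell^2(\mu_i)\to\ell^2(\mu_{i-1})$, so $\sigma_i^2=1-\lambda_i$ where $\lambda_i$ is the spectral gap of $P_i$ on $\ell^2(\mu_i)$; one estimates this gap by a Dirichlet-form/Poincaré argument. Because $K_i$ has nearest-neighbor structure with transition rates in $[1/4,3/4]$ (or whatever the defining bounds of $\mathcal Q_N(a,A)$ are), $P_i$ spreads mass over a bounded neighborhood with uniformly positive weights, and since $\mu_{i-1}\asymp\mu_i\asymp$ geometric with ratio in $[a,A]$, the relevant local "edge" weights $K_i(z,x)K_i(z,y)\mu_{i-1}(z)$ are comparable in a way that does not degrade with $N$. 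Concretely, one shows a uniform Poincaré inequality $\mathrm{Var}_{\mu_i}(f)\le C(a,A)\,\mathcal E_{P_i}(f,f)$ by a standard path/chain-of-inequalities argument (or by comparison with a fixed reference birth-and-death chain with geometric invariant measure, whose gap is a known constant), giving $\lambda_i\ge c(a,A)>0$ and hence $\sigma_i\le\rho(a,A)<1$.

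Putting it together: Theorem~\ref{th-sing} gives, for the $c$-stable $\mu_0$,
$$\Bigl|\frac{K_{0,n}(x,y)}{\mu_n(y)}-1\Bigr|\le [\mu_0(x)\mu_n(y)]^{-1/2}\rho^n\le C(a,A)\,A^{N/2}\,\rho^n,$$
so the right-hand side is $\le\epsilon$ as soon as $n\ge C(a,A)(N+\log_+1/\epsilon)$; by the triangle inequality in relative-sup (comparing $\mu_n'$ and $\mu_n$ each to $K_{0,n}(x,\cdot)$ for a fixed reference $x$) one then converts this point-to-$\mu_n$ statement into the merging statement for arbitrary pairs $\mu_0,\mu_0'$, with the same order of merging time. \textbf{The main obstacle} I anticipate is the uniform-in-$N$ gap bound $\sigma_i\le\rho(a,A)<1$: one must be careful that the non-reversibility of the \emph{operator} $K_i\colon\ell^2(\mu_i)\to\ell^2(\mu_{i-1})$ (the two measures differ) does not hurt, and that the endpoint modifications of the birth-and-death chain do not create a near-unit singular value; this is precisely the place where the rigidity $p/q\in[a,A]$ with $a>1$ is used, and why the answer is the "fast" time $N+\log_+1/\epsilon$ rather than $N^2+\dots$. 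The $c$-stability input, though also nontrivial, can be cited from \cite{SZ3}.
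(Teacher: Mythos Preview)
Your approach is correct and matches the paper's: the paper does not reproduce the proof here but explicitly presents this theorem as ``a good example of application of Theorem~\ref{th-sing}'' combined with $c$-stability, referring to \cite{SZ3} (in particular \cite[Theorem~4.11]{SZ3}) for the details. The structure you outline --- establish $c$-stability of $\mathcal Q_N(a,A)$ with respect to a geometric-type reference measure, use it both to control the prefactors $[\mu_0(x)\mu_n(y)]^{-1/2}\le C(a,A)^{N}$ and to compare all the $\mu_i$ uniformly so that a Dirichlet-form comparison yields the uniform singular-value bound $\sigma_i\le\rho(a,A)<1$, then read off the $O(N+\log_+1/\epsilon)$ merging time --- is exactly the argument given in \cite{SZ3}.
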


In contrast, note that the set $\mathcal Q=\{Q_1,Q_2\}$ where $Q_i$
is the  $p_i,q_i$ constant rate birth and death chain on $\{0,\dots,N\}$
and $p_1=q_2$, $q_1=p_2$ cannot be merging faster than $N^2$ because the
product $K=Q_1Q_2$ is, essentially, a simple random walk on a circle 
with almost uniform invariant measure. See \cite[Example 2.17]{SZ3}.

It may be illuminating to point out
that Theorem \ref{th-sing} is of some 
interest even in the time homogeneous case.
Suppose $K$ is irreducible aperiodic kernel with stationary measure $\pi$
and second largest singular value $\sigma$ on $\ell^2(\pi)$. Then we have
\begin{equation}
\left|\frac{K^n(x,y)}{\pi(y)}-1\right|\le [\pi(x)\pi(y)]^{-1/2}
\sigma^n.
\end{equation}
One difficulty attached to this estimate is that both
$[\pi(x)\pi(y)]^{-1/2}$ and
$\sigma$ depends on the perhaps  unknown stationary measure $\pi$.

Consider instead an initial measure $\mu_0>0$ and set $\mu_n=\mu_0K^n$.
Then we also have
\begin{equation}\label{hom1}
\left|\frac{K^n(x,y)}{\mu_n(y)}-1\right|\le [\mu_0(x)\mu_n(y)]^{-1/2}
\prod_1^n \sigma_i
\end{equation}
where $\sigma_i$ is the second largest singular value of $K: \ell^2(\mu_i)\ra \ell^2(\mu_{i-1})$. In particular, setting $\mu_0^*=\min_x\{\mu_0(x)\}$,
\begin{equation}\label{hom2}
\left|\frac{\pi(y)}{\mu_n(y)}-1\right|\le [\mu_0^*\mu_n(y)]^{-1/2}
\prod_1^n \sigma_i
\end{equation}
The estimates (\ref{hom1})-(\ref{hom2}) have the disadvantage that
each $\sigma_i$ depends on $\mu_0$ through $\mu_{i-1}$ and $\mu_i$. They have the
advantage that they do not depend in any direct way of $\pi$.
From a computational viewpoint, they offer a dynamical estimate of the error in 
the approximation of $\pi$ by $\mu_n$.

\subsection{An example where stability  fails}\label{stabfails}

In this section, we present a simple example that indicates why 
stability is a difficult property to study from a quantitative viewpoint.
Let $\Omega_N=\{0,1,\dots, N\}$, $N=2n+1$.
Fix $p,q,r\ge0$ with $p+q+r=1$, $p\neq q$, and $\eta_1\in [0,1)$.
Consider the Markov kernels $Q_1$ given by
\begin{eqnarray*}
Q_1(2x,2x+1)=p,&&  x=0,\dots,n\\
Q_1(2x,2x-1)=q, &&  x=1,\dots,n\\
Q_1(2x-1,2x)=q, &&  x=1,\dots,n\\
Q_1(2x+1,2x)=p, &&  x=0,\dots,n-1\\
Q_1(x,x)=r, &&  x=1,\dots, 2n,
\end{eqnarray*}
and
$$Q_1(0,0)=q+r, \;\; Q_1(N,N)=\eta_1, \;\;Q_1(N,N-1)=1-\eta_1.$$

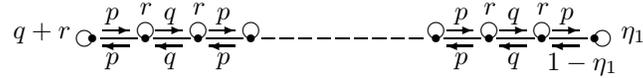
\begin{figure}[h]

\begin{center}
\caption{The chain with kernel $Q_1$} \vspace{.05in} \label{fig4-}
\end{center}

\begin{picture}(300, 30)(-50,0)
\put(30,15){\circle*{3}}\put(33,15){\line(1,0){14}}
\put(34,18){\vector(1,0){10}}\put(44,12){\vector(-1,0){10}}
\put(35,22){\makebox{$p$}}\put(35,5){\makebox{$p$}}
\put(27,15){\circle{6}}
\put(0,15){\makebox{$q+r$}}
\put(50,15){\circle*{3}}\put(53,15){\line(1,0){14}}
\put(54,18){\vector(1,0){10}}
\put(64,12){\vector(-1,0){10}}
\put(57,22){\makebox{$q$}}\put(57,5){\makebox{$q$}}
\put(50,18){\circle{6}}
\put(48,24){\makebox{$r$}}
\put(70,15){\circle*{3}}\put(73,15){\line(1,0){14}}
\put(74,18){\vector(1,0){10}}
\put(84,12){\vector(-1,0){10}}
\put(77,22){\makebox{$p$}}\put(77,5){\makebox{$p$}}
\put(70,18){\circle{6}}\put(68,24){\makebox{$r$}}
\put(90,15){\circle*{3}}\put(90,18){\circle{6}}
\multiput(94,15)(7,0){9}{\line(1,0){5}}

\put(220,15){\circle*{3}}\put(163,15){\line(1,0){14}}
\put(164,18){\vector(1,0){10}}
\put(174,12){\vector(-1,0){10}}
\put(167,22){\makebox{$p$}}\put(167,5){\makebox{$p$}}
\put(223,15){\circle{6}} \put(230,15){\makebox{$\eta_1$}}
\put(200,15){\circle*{3}}\put(203,15){\line(1,0){14}}
\put(204,18){\vector(1,0){10}}
\put(214,12){\vector(-1,0){10}}
\put(207,22){\makebox{$p$}}\put(202,3){\makebox{$1-\eta_1$}}
\put(200,18){\circle{6}} \put(198,24){\makebox{$r$}}
\put(180,15){\circle*{3}}\put(183,15){\line(1,0){14}}
\put(184,18){\vector(1,0){10}}
\put(194,12){\vector(-1,0){10}}
\put(187,22){\makebox{$q$}}\put(187,5){\makebox{$q$}}
\put(180,18){\circle{6}} \put(178,24){\makebox{$r$}}
\put(160,15){\circle*{3}}\put(160,18){\circle{6}}
\end{picture}
\end{figure}

This chain has reversible  measure  $\pi_1$ given by
$$\pi_1(0)=\dots=\pi_1(N-1)= (1-\eta_1)p^{-1}\pi_1(N)= \frac{(1-\eta_1)p^{-1}}{N(1-\eta_1)p^{-1}+1}.$$

Next, we let $Q_2$ be the kernel obtained by exchanging the roles of
$p$ and $q$ and replacing $\eta_1$ by $\eta_2\in [0,1)$. Obviously, this kernel has reversible measure
$\pi_2$ given by
$$\pi_2(0)=\dots=\pi_2(N-1)= (1-\eta_2)q^{-1}\pi_2(N)=
\frac{(1-\eta_2)q^{-1}}{N(1-\eta_2)q^{-1}+1}.$$

As long as $p,q$ are bounded away 
from $0$ and $1$  and $\eta_1,\eta_2$ are bounded away from $1$
these kernels $Q_1,Q_2$  
can be viewed as perturbations of the simple random walk on a
stick (with loops at the ends). Their respective invariant measures
are close to uniform. In fact, they are uniform if $\eta_1=q+r$, $\eta_2=p+r$. 

It is clear that, even if  
$r\eta_1\eta_2=0$, 
for any  sequence $(K_i)_1^\infty$  
with $K_i\in \{Q_1,Q_2\}$ we have
$$\min_{x,y\in \Omega_N}\{K_{m,m+2N+1}(x,y)\}\ge (\min\{p,q\})^{2N+1}>0.$$
Hence, if we let $\mu_0=u$ be the uniform measure and set $\mu_n=\mu_0K_{0,n}$
then there exists a constant $c=c(p,q,N)\in (1,\infty)$ such that
$$\forall\,n,\;\;\; c^{-1}\le \mu_n(x)\le c.$$
Further, it follows that any such sequence $(K_i)_1^\infty$ 
is merging in total variation and in relative-sup.

Nevertheless, we are going to show that the stability property 
fails at the quantitative level as $N$ tends to infinity. 
For this purpose, 
we compute the kernel of $K=Q_1Q_2$.  To understand $K$, it is useful
to imagine that the elements of $\{0,\dots,N\}$ arranged on a circle
with the even points in the upper half of the circle and the odd points
on the lower half of the circle. The only points on the horizontal
diameter of the circle are $0$ and $N$.

The kernel $K$ is given by the formulae:
\begin{eqnarray*}
K(2x,2x+2)= p^2,\; K(2x+2,2x)=q^2,&& x=0,\dots n-2,\\
K(2x+1,2x+3)=q^2,\; K(2x+3,2x+1)=p^2,&& x=0,\dots,n-2,\\
K(0,0)=2pq+r,\;\;K(x,x)= 2pq+r^2,  && x=1,\dots,N-2,\\
K(x,x+1)=K(x+1,x)= r(p+q)&& x=1,\dots,N-2,\\
K(0,1)=q^2+r(1-r),\; K(1,0)= p^2+ r(1-r), &&\\
K(N-1,N)=p\eta_2+r q,&&\\
K(N,N-1)=(1-\eta_2)\eta_1+ (1-\eta_1)r,&&\\
K(N-2,N)=q^2,\; K(N,N-2)=(1-\eta_1)p,&&\\
K(N-1,N-1)= p(q+1-\eta_2)+r^2,&&\\
K(N,N)=\eta_1\eta_2+ (1-\eta_1)q.&&
\end{eqnarray*}

The following special cases are of interest.
\begin{itemize}
\item[(i)] $r=0$, $\eta_1=q,\eta_2=p$. In this case $\pi_1=\pi_2$ is uniform
and $K$ is the kernel of a nearest-neighbors random walk on the circle with
transition probabilities $p^2$, $q^2$ and holding  $2pq$. Of course, this chain admits the uniform measure as invariant measure.
\item[(ii)] $r=0$, $\eta_1=\eta_2=0$. In this case, $K$ is essentially the kernel
of a $p'=p^2,q'=q^2,r'= 2pq$ birth and death chain. More precisely, after
writing $x_0=N, x_1=N-2,\dots, x_{n-1}=1, x_{n}=0, x_{n+1}=2, \dots, x_{N-1}= N-3, x_{N}=N-1$, we have
$$K(x_i,x_{i+1})=p^2,\;\;K(x_i,x_{i-1})=q^2, \;\;K(x_i,x_i)=2pq$$ except for
$K(x_0,x_1)=p$, $K(x_0,x_0)=q$, $K(x_N,x_N)=p+pq$.  This chain has invariant measure
$$\pi(x_i)=\pi(x_0)p^{-1}(p/q)^{2i}, \;i=1,\dots,N.$$
\end{itemize}
Using the same notation as in (ii) above,
we can compute the invariant measure $\pi$ of $K$ when $r=0$ for arbitrary values of $\eta_1,\eta_2$. Indeed,  $\pi$ must satisfy the following equations:
\begin{eqnarray*}
\pi(x_i)&=& 2pq \pi(x_i) +p^2 \pi(x_{i-1})+q^2\pi_i(x_{i+1}),\;\; i=2,\dots,N-1\\
\pi(x_1)&= &2pq \pi(x_1) +(1-\eta_1)p\pi(x_0)+ q^2\pi(x_2)\\
\pi(x_0)&=&(\eta_1\eta_2+(1-\eta_1)q)\pi(x_0)+ q^2\pi(x_1)+ p\eta_2 \pi(x_N)\\\pi(x_N)&=& p(q+1-\eta_2)\pi(x_N)+ (1-\eta_2)\eta_1 \pi(x_0)+  p^2 \pi(x_{N-1}).
\end{eqnarray*}
Because of the first equation, we set $\pi(x_i)= \alpha +\beta (p/q)^{2i}$
for $i=1,\dots, N$. This gives
\begin{eqnarray*}
(1-\eta_1)p\pi(x_0) &= &(\beta +\alpha)p^2\\
(p-\eta_1(\eta_2-q))\pi(x_0)&=&q^2(\alpha+\beta (p/q)^2)+ p\eta_2(\alpha+\beta(p/q)^{2N})\\
(1-\eta_2)\eta_1\pi(x_0)&=&  \alpha(q^2+p(\eta_2-p)) + p\eta_2\beta(p/q)^{2N}.
\end{eqnarray*}
Since the equations of the system $\pi=\pi K$ are not independent, the three 
equations above are not either. Indeed, subtracting  the last equation from the second yields the first. So the previous  system is equivalent to
\begin{eqnarray*}
(1-\eta_1)p^{-1}\pi(x_0) &= &\beta +\alpha\\
(1-\eta_2)\eta_1\pi(x_0)&=&  \alpha(q^2+p(\eta_2-p)) + p\eta_2\beta(p/q)^{2N}.
\end{eqnarray*}
Hence,  recalling that $q^2-p^2=q-p$ since $p+q=1$,
$$\beta= \frac{ (1-\eta_1)(q/p) -(1-\eta_2)}{q-p+p\eta_2(1-(p/q)^{2N})}\pi(x_0)$$
and
$$\alpha= \frac{(1-\eta_2)\eta_1- (1-\eta_1)\eta_2(p/q)^{2N}}{q-p +p\eta_2(1-(p/q)^{2N})}\pi(x_0).$$
When $\eta_1=\eta_2=0$ (resp. $\eta_1=q,\eta_2=p$), we recover $\alpha=0$, $\beta=p^{-1}\pi(x_0)$ (resp. $\alpha=\pi(x_0)$, $\beta=0$).

The denominator $q-p+p\eta_2(1-(p/q)^{2N})$ is positive or negative depending
on whether $q>p$ or $q<p$. By inspection of these formulae, 
one easily proves the following facts (the notation $x_i$ refers to the 
relabelling of the state space introduced in (ii) above).
\begin{itemize}
\item Assume that $q>p$, $r=0$. For any fixed $\eta_1>0$, there is a constant 
$c=c(p,q,\eta_1,\eta_2)\in (1,\infty)$ such that,  for  all large enough $N$, 
we have $$\forall\,x,\;\;c^{-1} \le (N+1)  \pi(x)\le c.$$
If $\eta_1=0$ then there is a constant 
$c=c(p,q,\eta_2)\in (1,\infty)$ such that,  for  all large enough $N$, 
we have $$\forall\,x_i,\;\;c^{-1} \le (q/p)^{2i}\pi(x_i)\le c.$$
\item Assume that $q<p$, $r=0$. For any fixed $\eta_2>0$, there is a constant 
$c=c(p,q,\eta_1,\eta_2)\in (1,\infty)$ such that,  for  all large enough $N$, 
we have $$\forall\,x,\;\;c^{-1} \le (N+1)  \pi(x)\le c.$$
If $\eta_2=0$ then there is a constant 
$c=c(p,q,\eta_1)\in (1,\infty)$ such that,  for  all large enough $N$, 
we have $$\forall\,x_i,\;\;c^{-1} \le (q/p)^{2(i-N)}\pi(x_i)\le c.$$
\end{itemize}

On the one hand, when
$r=\eta_1=\eta_2=0$ and $0<p\neq q<1$ are fixed, there are no 
constants $c$ independent  of $N$ for which  the 
set $\mathcal Q=\{ Q_1,Q_2\}$ is $c$-stable.  
One can even take $p_N,q_N$ so that 
$p_N/q_N =  1 +a N^{-\alpha} +o(N^{-1})$ as $N$ tends to infinity
with $a>0$ and $0<\alpha<1$. Then  $Q_1$ and $Q_2$ are asymptotically equal 
but there are no 
constants $c$ independent  of $N$ for which   
$\mathcal Q=\{ Q_1,Q_2\}$ is $c$-stable.  

On the other hand, when $0<p,q,r<1$, 
$\eta_1=q+r$ and  $\eta_2=p+r$, the uniform measure is
invariant for both kernels and $\mathcal Q$ is $1$-stable.

It seems likely that for fixed $\eta_1,\eta_2,r,p,q$ with  $0<p,q<1$
and  either $r>0$ or $\eta_1\eta_2>0$  the 
set $  \mathcal Q$ is $c$-stable but we do not know how to prove that.

\section{Time dependent edge weights}

In this section, we consider a family of graphs $\mathcal G_N=(\Omega_N,E_N)$.
These graphs are non-oriented with no multiple edges
(edges are pairs of vertices $e=\{x,y\}$ or singletons $e=\{x\}$).
We assume connectedness. 
We let $d(x)$ be the degree of $x$, i.e., $d(x)=\#\{e\in E: e\ni x\}$
and set
$$\delta(x)=\frac{d(x)}{\sum_x d(x)}.$$
For simplicity, we assume that
these graphs have bounded degree, i.e., 
$$\forall\,N,\;\;\forall\,x\in \Omega_N,\;\;d(x)\le D,$$ 
uniformly in $N$. 
A simple example
is the lazy stick of length $(N+1)$ as in Problem \ref{Pb0} and Figure \ref{fig2}.

\begin{figure}[h]

\begin{center}
\caption{The lazy stick} \vspace{.05in} \label{fig2}
\end{center}

\begin{picture}(300,10)(-50,0)
\put(30,15){\circle*{3}}\put(33,15){\line(1,0){14}}\put(27,15){\circle{6}}
\put(50,15){\circle*{3}}\put(53,15){\line(1,0){14}}\put(50,18){\circle{6}}
\put(70,15){\circle*{3}}\put(73,15){\line(1,0){14}}\put(70,18){\circle{6}}
\put(90,15){\circle*{3}}\put(90,18){\circle{6}}
\multiput(94,15)(7,0){9}{\line(1,0){5}}
\put(220,15){\circle*{3}}\put(163,15){\line(1,0){14}}
\put(223,15){\circle{6}}
\put(200,15){\circle*{3}}\put(203,15){\line(1,0){14}}\put(200,18){\circle{6}}
\put(180,15){\circle*{3}}\put(183,15){\line(1,0){14}}\put(180,18){\circle{6}}
\put(160,15){\circle*{3}}\put(160,18){\circle{6}}
\end{picture}
\end{figure}
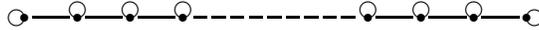

\subsection{Adapted kernels}

For any choice of positive weights
$\mathbf w=(w_e)_{e\in E}$ on $\mathcal G_n$, we obtain a reversible
Markov kernel
$K(\mathbf w)$ with support on pairs $(x,y)$ such that $\{x,y\}\in E$, in which case
$$K(\mathbf w)(x,y)= \frac{w_{\{x,y\}}}{\sum_{e\ni x}w_e}.$$
The reversible measure is
$$\pi(\mathbf w)(x)=
c(\mathbf w)^{-1}\sum_{e\ni x}w_e, \;\;c(\mathbf w)= \sum_x\sum_{e\ni x}w_e.$$
For instance, picking $\mathbf w=\mathbf 1$, i.e., $w_e=1$ for all $e\in E$,
we obtain the kernel
$K_{\mbox{\tiny sr}}(x,y)=K(\mathbf 1)(x,y)=1_E(\{x,y\})/d(x)$
of the simple random walk on the given graph. The reversible measure for
$K_{\mbox{\tiny sr}}$ is $\pi(\mathbf 1)=\delta$.

Set $$R(\mathbf w)=\max\left\{w_e/w_{e'}: e,e'\in E\right\}.$$
Observe that $R(\mathbf w)\le b$ implies
\begin{equation}
\forall\,x,\;\;  b^{-1}\delta(x)\le \pi(\mathbf w)(x)\le b\delta(x).
\end{equation}
For instance, to prove the upper bound, let $w_0=\min\{w_e\}$ and write
$$\pi(\mathbf w)(x)=
c(\mathbf w)^{-1}\sum_{e\ni x}w_e\le \frac{1}{\sum_x d(x)}
\sum_{e\ni x}\frac{w_e}{w_0}\le b\delta(x).$$
The proof of the lower bound is similar. Further, we also have
\begin{equation}
\forall\,x,y,\;\;  (Db)^{-1}\pi(\mathbf w)(y)\le 
\pi(\mathbf w)(x)\le Db \pi(\mathbf w)(y).
\end{equation}
Indeed, $\sum_{e\ni x}w_e \le Db  w_0\le Db\sum_{e\ni y}w_e$. 

For any $N$ and $b>1$, set
$$\mathcal Q(\mathcal G_N,b)=\{K(\mathbf w): R(\mathbf w)\le b\}.$$
For any $N$, $b>1$ and fixed probability measure $\pi$ on $\Omega_N$, set
$$\mathcal Q(\mathcal G_N,b,\pi)
=\{K(\mathbf w): R(\mathbf w)\le b,\;\pi(\mathbf w)=\pi\}.$$
The set of weight $\mathcal Q(\mathcal G_N,b,\pi)$
may well be empty. However,
we can use the Metropolis
algorithm construction  to prove the following lemma.
\begin{lemma} \label{lem-Met}
Assume that $\{x\}\in E$  for all $x$
(i.e, the graphs $\mathcal G_N$ have a loop at each vertex)
and  that $a^{-1}\le \pi(x)/\delta(x)\le a$. Then the set
$\mathcal Q(\mathcal G_N,a^2(b^3+bD),\pi)$ is non-empty
for any $b\ge 1$. It contains a continuum
of kernels $K(\mathbf w)$ for any $b>1$.
\end{lemma}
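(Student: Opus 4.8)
The plan is to produce the weights explicitly by a Metropolis--Hastings construction built on the simple random walk $K(\mathbf 1)$, which is reversible for $\delta=\pi(\mathbf 1)$, and then to read the resulting $\pi$-reversible kernel off as a weighted graph. Writing $h(z)=\pi(z)/d(z)$ and $V=\sum_x d(x)$, I would fix for each non-loop edge $e=\{x,y\}\in E$ a real $\beta_e\in[1/b,1]$ and set
$$w_{\{x,y\}}=\beta_{\{x,y\}}\,\min(h(x),h(y))\quad(x\ne y),\qquad
w_{\{x\}}=\pi(x)-\!\!\sum_{\substack{\{x,y\}\in E\\ y\ne x}}\!\! w_{\{x,y\}}.$$
With $\beta_e\equiv 1$ this is exactly the Metropolis--Hastings chain with proposal $K(\mathbf 1)$ and target $\pi$; a general $\beta_e\le 1$ just inserts a symmetric per-edge acceptance factor, still producing a $\pi$-reversible kernel. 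It is here that the assumption of a loop at every vertex is used: the loops $\{x\}$ are the reservoirs that absorb whatever mass is left once the off-diagonal weights have been assigned.

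First I would verify the three required properties. By construction $\sum_{e\ni x}w_e=\pi(x)$ for all $x$, so $c(\mathbf w)=1$ and $\pi(\mathbf w)=\pi$. The non-loop weights are positive; for a loop, $x$ has $d(x)-1$ non-loop neighbours and $\beta_{\{x,y\}}\min(h(x),h(y))\le h(x)$, whence $w_{\{x\}}\ge \pi(x)-(d(x)-1)h(x)=h(x)>0$, so $K(\mathbf w)$ is supported exactly on $E$. For the ratio bound, $a^{-1}\le\pi/\delta\le a$ together with $\delta(x)=d(x)/V$ gives $h(z)\in[a^{-1}/V,\,a/V]$ and $\pi(x)=d(x)h(x)\le aD/V$; hence the non-loop weights lie in $[a^{-1}/(bV),\,a/V]$ and the loop weights lie in $[h(x),\pi(x)]\subseteq[a^{-1}/V,\,aD/V]$. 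So every weight lies in $[a^{-1}/(bV),\,aD/V]$ and $R(\mathbf w)\le a^2bD\le a^2(b^3+bD)$. Taking $\beta_e\equiv 1$ (which is forced when $b=1$) already gives $\mathcal Q(\mathcal G_N,a^2(b^3+bD),\pi)\ne\varnothing$ for every $b\ge 1$.

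For the continuum when $b>1$: the interval $[1/b,1]$ has positive length, and since $\mathcal G_N$ is connected with at least two vertices there is at least one non-loop edge $e_0=\{x_0,y_0\}$. Letting $\beta_{e_0}$ range over $[1/b,1]$ while keeping all other $\beta_e$ equal to $1$ gives a one-parameter family of admissible weight systems, each obeying the bound above and hence lying in $\mathcal Q(\mathcal G_N,a^2(b^3+bD),\pi)$; the corresponding kernels are pairwise distinct because $K(\mathbf w)(x_0,y_0)=\beta_{e_0}\,\min(h(x_0),h(y_0))/\pi(x_0)$ depends injectively on $\beta_{e_0}$. This yields the asserted continuum.

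The step that needs the most care --- and the only place the degree bound $D$ really enters --- is the estimate on the leftover loop weights $w_{\{x\}}$: one must see simultaneously that they remain strictly positive (so $K(\mathbf w)$ is supported on all of $E$, not merely on a subgraph) and that they are not too large relative to the smallest off-diagonal weight. Everything else is routine bookkeeping with the hypothesis $a^{-1}\le\pi/\delta\le a$ together with the freedom to move a single parameter $\beta_{e_0}$.
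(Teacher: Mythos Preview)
Your argument is correct and rests on the same Metropolis--Hastings idea as the paper: produce a $\pi$-reversible weighting by accepting proposed edges with probability proportional to $\min\{\pi(x)/\pi(\mathbf v)(x),\pi(y)/\pi(\mathbf v)(y)\}$ and dumping the leftover mass into the loop at each vertex. The difference is only in how the proposal is chosen. The paper runs the Metropolis construction from an \emph{arbitrary} base weight $\mathbf v$ with $R(\mathbf v)\le b$; the parameter $b$ then enters both through $v_e/v_{e'}$ and through $\pi/\pi(\mathbf v)$, which is what produces the $a^2b^3$ term in the ratio of two non-loop weights. You instead fix the proposal to be (essentially) the simple random walk $K(\mathbf 1)$ and introduce the free parameter $b$ artificially via per-edge acceptance factors $\beta_e\in[1/b,1]$. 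Because $\pi(\mathbf 1)=\delta$ contributes no extra $b$, your bound comes out as $R(\mathbf w)\le a^2bD$, strictly sharper than the paper's $a^2(b^3+bD)$; you then simply observe that $a^2bD\le a^2(b^3+bD)$ to land inside the stated class. For the continuum, the paper implicitly varies $\mathbf v$ over $\{R(\mathbf v)\le b\}$, while you move a single $\beta_{e_0}$; both yield uncountably many distinct kernels once $b>1$. So your route is a slightly more economical special case of the paper's construction that happens to give a better constant.
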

\begin{proof}
Starting from any weight $\mathbf v$ with $R(\mathbf v)\le b$,
we define a new weight $\mathbf w$ by setting
$$\forall\, \{x,y\}\in E, x\neq y,\;\;w_{\{x,y\}}=
v_{\{x,y\}}
\min\left\{
\frac{\pi(x)}{\pi(\mathbf v)(x)}, \frac{\pi(y)}{\pi(\mathbf v)(y)}\right\}$$
and
$$w_{\{x\}}= c(\mathbf v)\pi(x)
-\sum_{y\neq x}
v_{\{x,y\}}
\min\left\{ \frac{\pi(x)}{\pi(\mathbf v)(x)}, \frac{\pi(y)}{\pi(\mathbf v)(y)}\right\}.$$
It is clear that $\pi(\mathbf w)=\pi$ (Indeed, $K(\mathbf w)$ is the 
kernel of the Metropolis algorithm chain
for $\pi$ with proposal based on $K(\mathbf v)$).
Further, since
$$\sum_{y\neq x}v_{\{x,y\}}
\min\left\{ \frac{\pi(x)}{\pi(\mathbf v)(x)}, \frac{\pi(y)}{\pi(\mathbf v)(y)}\right\}\le \pi(x)\left(c(\mathbf v)-\frac{v_{\{x\}}}{\pi(\mathbf v)(x)
}\right),$$
we have
$$ \frac{\pi(x)v_{\{x\}}}{\pi(\mathbf v)(x)}\le    w_{\{x\}} \le c(\mathbf v)\pi(x).$$
Now, since $a^{-1}\delta(x)\le \pi(x)\le a\delta(x)$ and
$\mathbf v \in \mathcal Q(\mathcal G_N,b)$, we obtain
$$\forall\,x\neq y, x'\neq y',\;\;\frac{w_{\{x,y\}}}{w_{\{x',y'\}}}\le b^3a^2.$$
and
$$\forall\,\{x,y\}\in E, x',\;\;
\max\left\{\frac{w_{\{x,y\}}}{w_{\{x'\}}},\frac{w_{\{x'\}}}{w_{\{x,y\}}}
\right\}\le a^2bD.$$
Hence $R(\mathbf w)\le a^2(b^3+bD)$ and  
$K(\mathbf w)\in \mathcal Q(\mathcal G_N,a^2(b^3+bD), \pi)$ as desired.
\end{proof}

\subsection{Time homogeneous results}
For each $N$, let $\sigma_N$ be the second singular value
of $(K_{\mbox{\tiny sr}},\delta)$, i.e., the second largest eigenvalue in
absolute value of the simple random walk on $\mathcal G_N$. For instance,
for the ``lazy stick'' of Figure \ref{fig2}, $1-\sigma_N $ is of order $1/N^2$.
For any $\mathbf w$, let $\sigma(\mathbf w)$ be the second largest
singular value of $(K(\mathbf w), \pi(\mathbf w))$. The following lemma
concerns the time homogeneous chains associated with kernels in 
$\mathcal Q(\mathcal G_N,b)$.

\begin{proposition} \label{lem-thsv}
For any $b\ge 1$ and any $K(\mathbf w)\in \mathcal Q(\mathcal G_N,b)$
$$   b^{-2}(1-\sigma_N)\le 1-\sigma(\mathbf w)  .$$
In particular, uniformly over $w\in \mathcal Q(\mathcal G_N,b)$,
\begin{equation}\label{cvth1}
\left|\frac{K(\mathbf w)^n(x,y)}{\pi(\mathbf w)(y)}-1\right|\le
bd_*^{-1}\Delta_N(1- b^{-2}(1-\sigma_N))^n,
\end{equation}
with $ \Delta_N=\sum_x d(x)$,
$d_*=\min_x \{d(x)\}.$
\end{proposition}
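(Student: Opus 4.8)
The plan is to first establish the spectral comparison $b^{-2}(1-\sigma_N)\le 1-\sigma(\mathbf w)$ by a Dirichlet-form comparison, and then to derive (\ref{cvth1}) by feeding this into the time-homogeneous case of Theorem~\ref{th-sing}. Since $K(\mathbf w)$ is reversible with respect to $\pi(\mathbf w)$, its singular values on $\ell^2(\pi(\mathbf w))$ are the moduli of its eigenvalues $1=\lambda_0>\lambda_1\ge\cdots\ge\lambda_{\min}\ge -1$, so $1-\sigma(\mathbf w)=\min\{1-\lambda_1,\,1+\lambda_{\min}\}$, and likewise $1-\sigma_N=\min\{1-\lambda_1^{\mathrm{sr}},\,1+\lambda_{\min}^{\mathrm{sr}}\}$ for $K_{\mbox{\tiny sr}}$. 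Hence it suffices to prove $1-\lambda_1(\mathbf w)\ge b^{-2}(1-\lambda_1^{\mathrm{sr}})$ and $1+\lambda_{\min}(\mathbf w)\ge b^{-2}(1+\lambda_{\min}^{\mathrm{sr}})$ and then take minima.

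For these two inequalities I would use the Rayleigh-quotient characterizations $1-\lambda_1(K)=\inf_f \langle(I-K)f,f\rangle_\pi/\mathrm{Var}_\pi(f)$ and $1+\lambda_{\min}(K)=\inf_f\langle(I+K)f,f\rangle_\pi/\|f\|_\pi^2$. Writing $\pi(\mathbf w)(x)K(\mathbf w)(x,y)=w_{\{x,y\}}/c(\mathbf w)$ for $x\ne y$ and using the identity $\langle(I+K)f,f\rangle_\pi=\tfrac12\sum_{x,y}|f(x)+f(y)|^2\pi(x)K(x,y)$ valid for reversible $K$, one obtains
$$\langle(I-K(\mathbf w))f,f\rangle_{\pi(\mathbf w)}=\tfrac1{c(\mathbf w)}\sum_{\{x,y\}\in E,\,x\ne y}w_{\{x,y\}}\,|f(x)-f(y)|^2$$
and an analogous expression for the $I+K(\mathbf w)$ form in which, additionally, the loop weights $w_{\{x\}}$ appear. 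Because $R(\mathbf w)\le b$ gives $w_0\le w_e\le b\,w_0$ for \emph{every} edge (loops included), where $w_0=\min_e w_e$, while $c(\mathbf 1)=\Delta_N$ and $c(\mathbf w)=\sum_x\sum_{e\ni x}w_e\le b\,w_0\,\Delta_N$, a term-by-term comparison yields $\langle(I\mp K(\mathbf w))f,f\rangle_{\pi(\mathbf w)}\ge b^{-1}\langle(I\mp K_{\mbox{\tiny sr}})f,f\rangle_\delta$ for all $f$. Combining this with $\mathrm{Var}_{\pi(\mathbf w)}(f)\le b\,\mathrm{Var}_\delta(f)$ and $\|f\|^2_{\pi(\mathbf w)}\le b\,\|f\|^2_\delta$ (immediate from the bound $b^{-1}\delta\le\pi(\mathbf w)\le b\,\delta$ established above) and taking infima over $f$ gives the two desired gap inequalities, hence $1-\sigma(\mathbf w)\ge b^{-2}(1-\sigma_N)$.

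Finally, for (\ref{cvth1}) I would apply the time-homogeneous instance of Theorem~\ref{th-sing}, namely $|K(\mathbf w)^n(x,y)/\pi(\mathbf w)(y)-1|\le[\pi(\mathbf w)(x)\pi(\mathbf w)(y)]^{-1/2}\sigma(\mathbf w)^n$, bound the prefactor via $\pi(\mathbf w)(z)\ge b^{-1}\delta(z)\ge b^{-1}d_*/\Delta_N$ so that $[\pi(\mathbf w)(x)\pi(\mathbf w)(y)]^{-1/2}\le b\,d_*^{-1}\Delta_N$, and use $\sigma(\mathbf w)^n=(1-(1-\sigma(\mathbf w)))^n\le(1-b^{-2}(1-\sigma_N))^n$ from the first step. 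The step I expect to require the most care is the negative-spectrum comparison: one must not forget that $\sigma(\mathbf w)$ may be governed by $-\lambda_{\min}$ rather than by $\lambda_1$, so the $I+K$ quadratic form has to be compared as well, and it is precisely here that one uses $R(\mathbf w)\le b$ to control the loop weights. (When $\mathcal G_N$ is loopless and bipartite one has $\sigma_N=1$ and every inequality is trivial, consistent with the statement.)
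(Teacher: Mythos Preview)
Your proof is correct and follows essentially the same route as the paper: a Dirichlet-form comparison (as in \cite{DS-C}) applied separately to the forms $\langle (I-K)f,f\rangle$ and $\langle (I+K)f,f\rangle$ to control both ends of the spectrum, combined with the obvious $\ell^2$ and variance comparisons coming from $b^{-1}\delta\le\pi(\mathbf w)\le b\delta$, and then the standard $\ell^2$ bound (the time-homogeneous case of Theorem~\ref{th-sing}) for (\ref{cvth1}). The paper phrases the Dirichlet-form inequalities with the explicit factor $c(\mathbf w)/\Delta_N$, which cancels against the same factor in the variance comparison; your version absorbs this directly into the constant $b^{-1}$, which is equivalent and arguably cleaner. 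Your explicit remark that the $I+K$ form must also be compared (and that the loop weights enter there) is exactly the point the paper makes with its form $\mathcal F_{\mathbf w}$.
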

\begin{proof} This is based on the basic comparison techniques
of \cite{DS-C}. In the  present case, it is best to compare the lowest
and second largest eigenvalues of $K_{\mbox{\tiny sr}}$,  
call them $\beta_-$ and $\beta_1$, respectively, with the same quantities
$\beta_-(\mathbf w)$ and $\beta_1(\mathbf w)$ relative to
$K(\mathbf w)$. The relation with the singular value 
$\sigma(\mathbf w)$ is given by
$\sigma(\mathbf w)=\max \{ -\beta_-(\mathbf w),
\beta_1(\mathbf w)\}$. For comparison purpose, one uses the Dirichlet forms
(recall that edges here are (non-oriented) pairs $\{x,y\}$)
$$\mathcal E_{\mathbf w}(f,f)=\frac{1}{c(\mathbf w)}
\sum_{e=\{x,y\}}|f(x)-f(y)|^2w_e$$
and $$\mathcal E_{\mbox{\tiny sr}}(f,f)=
\mathcal E_{\mathbf 1}(f,f)=\frac{1}{\Delta_N}\sum_{e=\{x,y\}}|f(x)-f(y)|^2.$$
Clearly, for any $f$,
\begin{equation}\label{comp1}
\mathcal E_{\mbox{\tiny sr}}(f,f)\le \frac{c(\mathbf w)b}{\Delta_N}\mathcal E_{\mathbf w}(f,f),\;\;\mbox{Var}_{\pi(\mathbf w)}(f)\le \frac{\Delta_Nb}{c(\mathbf w)}
\mbox{Var}_\delta(f).\end{equation}
This yields $1-\beta_1\le b^2(1-\beta_1(\mathbf w))$.  A similar argument using
(the sum here is over all $x,y$ with $\{x,y\}\in E$,
which explains the $\frac{1}{2}$ factor)
$$\mathcal F_{\mathbf w}(f,f)=
\frac{1}{2c(\mathbf w)}\sum_{x,y:\{x,y\}\in E}|f(x)+f(y)|^2w_{\{x,y\}}$$
yields $1+\beta_-\le b^2(1+\beta_-(\mathbf w))$. This gives
the desired result.
\end{proof}

\begin{example} For our present purpose, call
``$(d,\epsilon)$-expander family''
any infinite family of regular  graphs $\mathcal G_N$ of fixed degree $d$,
with $|\Omega_N|=\# \Omega_N$ tending to infinity with $N$ and
satisfying $\sigma_N\le 1-\epsilon$. See \cite{HLW,Lub2}
for various related definitions and discussions of particular examples.
Proposition \ref{lem-thsv} shows that for any $K(\mathbf w)\in
\mathcal Q(\mathcal G_N,b)$, we have
$$\left|\frac{K(\mathbf w)^n(x,y)}{\pi(\mathbf w)(y)}-1\right|\le
b|\Omega_N|(1- \epsilon/b^{2})^n,$$
\end{example}

Let us point out that, beside singular values ,  there are further related techniques 
that yield complementary results. They  include
the use of Nash and logarithmic Sobolev inequalities (modified or not).
See \cite{DS-N,DS-L,SZ,SZ4}.
For instance, to show that on the ``lazy stick'' $\mathcal G_N$ of
Figure \ref{fig2}, any chains with kernel in $\mathcal Q(\mathcal G_N,b)$
converges to stationarity in order $N^2$, one uses the Nash inequality technique of \cite{DS-N}.

\subsection{Time inhomogeneous chains}

A fundamental question about time inhomogeneous Markov chains is 
whether or not a result similar to  (\ref{cvth1}) holds true for time
inhomogeneous chains with kernels in $\mathcal Q_N(\mathcal G_N,b)$.
Little is known about this.

Fix $b> 1$. Let $(K_i)_1^\infty$ be a sequence of Markov kernels in
$\mathcal Q(\mathcal G_N,b)$ and $K_{m,n}$ be the associated iterated kernel.
Recall  that the property ``$\sigma_N<1$'' 
is equivalent to the irreducibility and
aperiodicity of $K_{\mbox{\tiny sr}}$. Because all the kernels in
$\mathcal Q(\mathcal G_N,b)$ are (uniformly) adapted to the graph
structure $\mathcal G_N$, there exists $ \ell=\ell(N,b)$ and
$\epsilon=\epsilon(N,b)>0$ such that, for all n,
$K_{n,n+\ell}(x,y)\ge \epsilon$. As explained in Section \ref{FSS},
this implies relative-sup merging for any such time inhomogeneous chain.
However, this result is purely qualitative. No acceptable
quantitative result can be obtain by such an argument.

\begin{problem} \label{wpb1}
Fix reals $D,b>1$. Prove or disprove
that there exists a constant $A$ such that
for any family $\mathcal G_N$ with maximal degree at most $D$,
any sequence $(K_i)_1^\infty$ with  $K_i\in \mathcal Q(\mathcal G_N,b)$, 
any initial
distributions $\mu_0,\mu_0'$ and any $\epsilon>0$, if
$$n\ge A (1-\sigma_N)^{-1}(\log|\Omega_N|+\log_+(1/\epsilon))$$
then  $\mu_n=\mu_0K_{0,n}$ and $\mu'_n=\mu_0'K_{0,n}$ satisfy
$$\max_{x\in \Omega_N} \left\{
\left|\frac{\mu'_n(x)}{\mu_n(x)}-1\right|\right\}\le \epsilon.$$
\end{problem}
This is an open problem, even for the ``lazy stick'' of Figure \ref{fig2}.
It seems rather unclear whether one should except a positive answer or not.

Next, we consider  another question, quite interesting but, a priori,
of a different nature. Recall that, given $\mathcal G_N$,
 $\delta$  denotes  the normalized
reversible measure of $K_{\mbox{\tiny sr}}$.

\begin{problem} \label{wpb2}
Fix reals $D,b>1$. Prove or disprove
that there exists a constant $A\ge 1$ such that
for any family $\mathcal G_N$ with maximal degree at most $D$,
any sequence $(K_i)_1^\infty$with  $K_i\in \mathcal Q(\mathcal G_N,b)$
and any initial distributions $\mu_0$, if
$$n\ge A (1-\sigma_N)^{-1}(\log|\Omega_N|)$$
then  $\mu_n=\mu_0K_{0,n}$ satisfies
$$\forall\,x\in \Omega_N,\;\;\; A^{-1}\le \frac{\mu_n(x)}{\delta(x)}
\le A.$$
\end{problem}

In words, a positive solution to Problem \ref{wpb1}
yields the relative-sup  merging in time of order
at most $A(1-\sigma_N)^{-1}\log|\Omega_N|$, uniformly
for any time inhomogeneous chain with kernels in
$\mathcal Q(\mathcal G_N,b)$ whereas a positive solution to Problem \ref{wpb2} would indicate
that, after a time of order at most
$A(1-\sigma_N)^{-1}\log|\Omega_N|$, uniformly
for any time inhomogeneous chain with kernels in
$\mathcal Q(\mathcal G_N,b)$ and for any initial distribution $\mu_0$,
the measure $\mu_n=\mu_0K_{0,n}$ is comparable to $\delta$.
In fact, because of the uniform way in which Problem \ref{wpb2} is formulated,
a positive answer implies
that the measure $\delta$ is $A$-stable for $\mathcal Q(\mathcal G_N,b)$.

At this writing, the best evidence for a positive answer to these problems
is contained in the following two partial results.
The first result concerns sequences whose kernels share the same
invariant distribution. For the proof, see \cite{SZ}.\
\begin{theorem} \label{th-w1}
Fix reals $D,b>1$ and  measures $\pi_N$
on $\Omega_N$.  Assume that  $\mathcal G_N$ has maximal degree at most $D$
and that $\mathcal Q(\mathcal G_N,b,\pi_N)$ is non-empty.
Under these circunstances, there is a constant $A=A(D,b)$ such that
for any $\epsilon>0$, any sequence $(K_i)_1^\infty$ with
$K_i\in \mathcal Q(\mathcal G_N,b,\pi_N)$ and any pair $\mu_0, \mu'_0$
of initial distributions, if
$$n\ge A (1-\sigma_N)^{-1}(\log|\Omega_n|+\log_+(1/\epsilon))$$
then  $\mu_n=\mu_0K_{0,n}$ and $\mu'_n=\mu_0'K_{0,n}$ satisfy
$$\max_{x\in \Omega_N} \left\{
\left|\frac{\mu'_n(x)}{\mu_n(x)}-1\right|\right\}\le \epsilon.$$
\end{theorem}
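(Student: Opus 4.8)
The plan is to exploit the fact that all the kernels $K_i$ share the \emph{same} reversible measure $\pi_N$, which reduces the problem to a single-operator spectral estimate on $\ell^2(\pi_N)$. The key observation is that when $\pi(\mathbf w)=\pi_N$ for every kernel in the sequence, each $K_i:\ell^2(\pi_N)\to\ell^2(\pi_N)$ is self-adjoint (by reversibility), so its singular values on $\ell^2(\pi_N)$ coincide with the absolute values of its eigenvalues; moreover $\mu_i=\pi_N$ for every $i$ once we take $\mu_0=\pi_N$, so the abstract singular-value bound of Theorem~\ref{th-sing} applies with $\sigma_i=\sigma(K_i,\pi_N,\pi_N)$ and $\mu_n(y)=\pi_N(y)$. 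The first step, then, is to invoke Theorem~\ref{th-sing} to get, for any starting point $x$,
$$\left|\frac{K_{0,n}(x,y)}{\pi_N(y)}-1\right|\le [\pi_N(x)\pi_N(y)]^{-1/2}\prod_1^n\sigma_i,$$
and then pass from the two fixed starting points $\mu_0,\mu_0'$ to this pointwise estimate by linearity (writing $\mu_n(y)=\sum_x\mu_0(x)K_{0,n}(x,y)$, so $|\mu_n'(y)/\pi_N(y)-1|\le\max_x|K_{0,n}(x,y)/\pi_N(y)-1|$, and similarly for $\mu_n$), then combining the two.

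The second, and main, step is the uniform spectral gap estimate: we must bound $\sigma_i=\sigma(K(\mathbf w_i),\pi_N)$ away from $1$ \emph{uniformly} over $\mathbf w_i$ with $R(\mathbf w_i)\le b$ and $\pi(\mathbf w_i)=\pi_N$, in terms of $1-\sigma_N$. This is exactly the content of the comparison argument behind Proposition~\ref{lem-thsv}: comparing the Dirichlet form $\mathcal E_{\mathbf w_i}$ with $\mathcal E_{\mbox{\tiny sr}}$ and the variances $\mathrm{Var}_{\pi_N}$ with $\mathrm{Var}_\delta$, one gets $1-\sigma(\mathbf w_i)\ge c(D,b)^{-1}(1-\sigma_N)$ for a constant depending only on $D$ and $b$; the hypothesis $\pi(\mathbf w_i)=\pi_N$ ensures the variance comparison constant is controlled by $a=a(D,b)$ where $a^{-1}\le\pi_N(x)/\delta(x)\le a$ — note that nonemptiness of $\mathcal Q(\mathcal G_N,b,\pi_N)$ forces such an $a$ to exist, bounded in terms of $D,b$ alone, since for any admissible $\mathbf w$ we have $b^{-1}\delta\le\pi(\mathbf w)\le b\delta$ and $\pi(\mathbf w)=\pi_N$. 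Hence $\prod_1^n\sigma_i\le(1-c(D,b)^{-1}(1-\sigma_N))^n$.

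The third step is bookkeeping: using $[\pi_N(x)\pi_N(y)]^{-1/2}\le a|\Omega_N|$ (again from $\pi_N\ge a^{-1}\delta\ge a^{-1}/(D|\Omega_N|)$, up to constants), the right-hand side is at most $C(D,b)|\Omega_N|\,(1-c(D,b)^{-1}(1-\sigma_N))^n$, which is $\le\epsilon$ as soon as $n\ge A(1-\sigma_N)^{-1}(\log|\Omega_N|+\log_+(1/\epsilon))$ for a suitable $A=A(D,b)$; this is a routine use of $(1-t)^n\le e^{-nt}$.

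The step I expect to be the genuine obstacle — or at least the one requiring care — is the uniform Dirichlet-form/variance comparison in step two: one needs the comparison constant to depend only on $D$ and $b$ and not on $N$, and this hinges on the fact that for kernels \emph{adapted to the same graph with the same $\pi_N$}, the ratios $w_e/w_{e'}$ are uniformly bounded and $c(\mathbf w)/\Delta_N$ is pinned down (up to $D,b$) by the constraint $\pi(\mathbf w)=\pi_N=\pi(\mathbf w')$. This is precisely why the theorem assumes a common invariant measure; without it the comparison constants would drift with $N$, which is exactly the difficulty flagged in Problems~\ref{wpb1}--\ref{wpb2}. I would therefore present step two as a lemma isolating this uniform comparison, modeled on the proof of Proposition~\ref{lem-thsv} but carried out simultaneously for all $\mathbf w$ admissible for $(\mathcal G_N,b,\pi_N)$, and then the theorem follows by assembling steps one and three.
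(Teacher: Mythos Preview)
Your proposal is correct and follows precisely the line the paper's own tools point to (the paper itself defers the proof to \cite{SZ}): take $\mu_0=\pi_N$ in Theorem~\ref{th-sing} so that $\mu_i\equiv\pi_N$ for all $i$ and the abstract singular values $\sigma_i$ reduce to the ordinary $\sigma(\mathbf w_i)$ on $\ell^2(\pi_N)$; then invoke the uniform gap $1-\sigma(\mathbf w_i)\ge b^{-2}(1-\sigma_N)$ from Proposition~\ref{lem-thsv} and finish with the bookkeeping you describe.

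One clarification worth making in your write-up: the place where the common invariant measure does its real work is \emph{not} the Dirichlet-form comparison in your step two --- Proposition~\ref{lem-thsv} already gives the bound $1-\sigma(\mathbf w)\ge b^{-2}(1-\sigma_N)$ for \emph{every} $\mathbf w$ with $R(\mathbf w)\le b$, with no constraint on $\pi(\mathbf w)$ --- but rather step one, where it pins the evolving reference measures $\mu_i$ in Theorem~\ref{th-sing} to the constant $\pi_N$, so that the abstract $\sigma(K_i,\mu_i,\mu_{i-1})$ coincide with the quantities Proposition~\ref{lem-thsv} controls. Without a shared invariant measure the $\mu_i$ drift and this identification breaks down, which is exactly the obstruction behind Problems~\ref{wpb1}--\ref{wpb2}.
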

Note the the hypothesis that $\mathcal Q(\mathcal G_N,b,\pi_N)$ is non-empty
implies that $b^{-1}\le \pi_N/\delta\le b$.
The  second result assumes $c$-stability. For the proof, see \cite{SZ4}.
\begin{theorem} \label{th-w2}
Fix reals $D,b,c>1$. Assume that $\mathcal G_N$ has maximal degree at most $D$.
Let $(K_i)_1^\infty$ be a sequence of kernels on $\Omega_N$ with
$K_i\in \mathcal Q(\mathcal G_N,b)$. Assume that the distribution
$\delta$ on $\Omega_N$ is $c$-stable for $(K_i)_1^\infty$. Then there exists
a constant $A=A(D,b,c)$ such that
for any $\epsilon>0$ and pair $\mu_0, \mu'_0$
of initial distributions, if
$$n\ge A (1-\sigma_N)^{-1}(\log|\Omega_n|+\log_+(1/\epsilon))$$
then  $\mu_n=\mu_0K_{0,n}$ and $\mu'_n=\mu_0'K_{0,n}$ satisfy
$$\max_{x\in \Omega_N} \left\{
\left|\frac{\mu'_n(x)}{\mu_n(x)}-1\right|\right\}\le \epsilon.$$
\end{theorem}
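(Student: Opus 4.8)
\textbf{Proof plan for Theorem \ref{th-w2}.} The strategy is to use Theorem \ref{th-sing} (the singular value bound for time inhomogeneous chains) with the particular choice of base measure $\mu_0=\delta$, and then exploit $c$-stability to control all the quantities that appear on the right hand side of that bound uniformly in $n$. First I would fix the sequence $(K_i)_1^\infty$ with $K_i\in\mathcal Q(\mathcal G_N,b)$ and set $\nu_n=\delta K_{0,n}$, where $\delta$ is the degree measure. By the $c$-stability hypothesis we have $c^{-1}\delta\le\nu_n\le c\delta$ for all $n$; combined with the bounded degree assumption this gives $c^{-1}(Db)^{-1}\le |\Omega_N|\nu_n(x)/1\cdot(\text{const})$ type two-sided bounds, and in particular $\min_x\nu_n(x)\ge (c|\Omega_N|D)^{-1}$ up to constants depending only on $D$. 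This handles the prefactor $[\mu_0(x)\mu_n(y)]^{-1/2}$ in Theorem \ref{th-sing}: with $\mu_0=\delta$ it is at most $Ac^{1/2}|\Omega_N|$ for a constant $A=A(D)$.

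The heart of the argument is to control the product $\prod_1^n\sigma_i$, where $\sigma_i=\sigma(K_i,\nu_i,\nu_{i-1})$ is the second singular value of $K_i$ acting from $\ell^2(\nu_i)$ to $\ell^2(\nu_{i-1})$. Here $\sigma_i^2$ is the second eigenvalue of the Markov kernel $P_i$ in (\ref{Pi}), which is self-adjoint on $\ell^2(\nu_i)$. The plan is to bound $1-\sigma_i^2$ from below by a constant multiple of $1-\sigma_N^2$ (equivalently of $1-\sigma_N$) using Dirichlet form comparison in the spirit of Proposition \ref{lem-thsv}. Concretely, one writes the Dirichlet form of $P_i$ on $\ell^2(\nu_i)$ and compares it, first, to the Dirichlet form of $K_iK_i^*$ (or a two-step kernel adapted to the graph) and then, via the uniform bounds $K_i(x,y)\ge b^{-1}\delta$-type inequalities coming from $K_i\in\mathcal Q(\mathcal G_N,b)$ together with $c^{-1}\delta\le\nu_j\le c\delta$, to $\mathcal E_{\mathrm{sr}}$ and $\mathrm{Var}_\delta$. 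Since all the measures $\nu_{i-1},\nu_i$ are within a factor $c$ of $\delta$ and all the conductances are within a factor $b$ of the simple-random-walk ones, every Rayleigh-quotient comparison costs only a factor depending on $b,c,D$. This yields $\sigma_i\le 1-\kappa(1-\sigma_N)$ for some $\kappa=\kappa(D,b,c)>0$, hence $\prod_1^n\sigma_i\le(1-\kappa(1-\sigma_N))^n\le e^{-\kappa(1-\sigma_N)n}$.

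Once these two ingredients are in place, Theorem \ref{th-sing} gives, for any starting point $x$,
$$\left|\frac{K_{0,n}(x,y)}{\nu_n(y)}-1\right|\le Ac^{1/2}|\Omega_N|\,e^{-\kappa(1-\sigma_N)n}.$$
Choosing $n\ge A'(1-\sigma_N)^{-1}(\log|\Omega_N|+\log_+(1/\epsilon))$ with $A'$ large enough (depending on $A,c,\kappa$, hence only on $D,b,c$) makes the right side at most $\epsilon/3$, say, uniformly in $x,y$. Finally, for arbitrary initial distributions $\mu_0,\mu_0'$ one writes $\mu_n(y)=\sum_x\mu_0(x)K_{0,n}(x,y)$ and similarly for $\mu_n'$; the pointwise estimate above shows $|\mu_n(y)/\nu_n(y)-1|\le\epsilon/3$ and $|\mu_n'(y)/\nu_n(y)-1|\le\epsilon/3$, and a routine manipulation of these two inequalities yields $|\mu_n'(y)/\mu_n(y)-1|\le\epsilon$, as desired.

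\textbf{Main obstacle.} The delicate point is the singular-value comparison $1-\sigma_i\ge\kappa(1-\sigma_N)$. Unlike the time homogeneous setting of Proposition \ref{lem-thsv}, the kernel $P_i$ in (\ref{Pi}) is built from $K_i$ together with the \emph{two} different measures $\nu_{i-1}$ and $\nu_i$, and $\nu_{i-1},\nu_i$ are themselves determined by the whole past of the sequence; one must check that the comparison constants genuinely depend only on $b,c,D$ and not on $i$, $N$, or the sequence. Making the Dirichlet-form comparison for the composite operator $K_i^*K_i$ clean — in particular dealing with the fact that $K_i^*$ is a Markov operator between different $L^2$ spaces and that $P_i$ need not be adapted to a single fixed graph in an obvious way — is where the real work lies, and is presumably carried out in detail in \cite{SZ4}.
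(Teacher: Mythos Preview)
The paper does not actually prove Theorem \ref{th-w2}; it simply refers the reader to \cite{SZ4}. Your outline is precisely the approach the paper signals in the paragraph following Theorem \ref{th-sing} (``When coupled with the hypothesis of $c$-stability, the above result becomes a powerful and very applicable tool. See, e.g., \cite[Theorem 4.11]{SZ3}''), and you have correctly identified the substantive obstacle, namely the Dirichlet-form comparison for the two-measure operator $P_i$ of (\ref{Pi}).

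One small slip: the $c$-stability hypothesis does \emph{not} assert that $\nu_n=\delta K_{0,n}$ satisfies $c^{-1}\delta\le\nu_n\le c\delta$; it only guarantees the existence of \emph{some} positive initial measure $\mu_0$ with $c^{-1}\delta\le\mu_0K_{0,n}\le c\delta$. However, taking $n=0$ in that inequality forces $c^{-1}\delta\le\mu_0\le c\delta$, so you can run your argument with this $\mu_0$ in place of $\delta$ at the cost of an extra factor of $c$ in the constants; nothing else changes.

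It is worth noting that the referenced paper \cite{SZ4} is titled ``Nash and log-Sobolev inequalities,'' and the present paper remarks just after Theorem \ref{th-w2} that under the stability hypothesis these functional-inequality methods also apply. So the proof in \cite{SZ4} may proceed via Nash or log-Sobolev inequalities rather than purely through the singular-value route you describe; but both approaches rest on the same comparison-under-stability mechanism, and your singular-value version is the one most directly supported by the tools developed in the present paper.
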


Theorem \ref{th-w1} can be viewed as a special case of Theorem \ref{th-w2}.
Indeed, if $\mathcal Q(\mathcal G_N,b,\pi_N)$ is not empty then we must have
$b^{-1}\delta\le \pi_N\le b^1 \delta$ so that $\delta$ is a $b$-stable
measure for any sequence of kernels in $\mathcal Q(\mathcal G_N,b,\pi_N)$.
By Lemma \ref{lem-Met}, it is not difficult to produce examples where
Theorem \ref{th-w1} applies.  Finding examples of application of Theorem
\ref{th-w2} (where the $K_i$'s  do not all share the same invariant
distribution) is a difficult problem. 

Under the stability hypothesis of Theorem \ref{th-w2}, methods such as Nash
inequalities and logarithmic Sobolev inequality can also be applied.
See \cite{SZ4}.

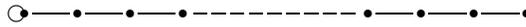
\begin{figure}[h]
\begin{center}
\caption{The underlying graph for the kernels $Q_1,Q_2$ of Section 
\ref{stabfails}} 
\vspace{.05in} \label{figfail}
\end{center}

\begin{picture}(300,10)(-50,0)
\put(30,15){\circle*{3}}\put(33,15){\line(1,0){14}}\put(27,15){\circle{6}}
\put(50,15){\circle*{3}}\put(53,15){\line(1,0){14}}
\put(70,15){\circle*{3}}\put(73,15){\line(1,0){14}}
\put(90,15){\circle*{3}}
\multiput(94,15)(7,0){9}{\line(1,0){5}}
\put(220,15){\circle*{3}}\put(163,15){\line(1,0){14}}
\put(200,15){\circle*{3}}\put(203,15){\line(1,0){14}}
\put(180,15){\circle*{3}}\put(183,15){\line(1,0){14}}
\put(160,15){\circle*{3}}
\end{picture}
\end{figure}

\begin{remark} Consider the kernels $Q_1,Q_2 $ of Section \ref{stabfails}, 
with fixed $p,q,r,\eta_1,\eta_2$ with $r=\eta_1=\eta_2=0$ and $0<p\neq q<1$.
The kernels $Q_1,Q_2$ are adapted to the graph structure of Figure 
\ref{figfail}. We proved in Section \ref{stabfails} that 
stability fails for $\mathcal Q=\{Q_1,Q_2\}$. Even on the
``lazy stick'' of Figure \ref{fig2}, we do not understand whether stability
holds or not. An interesting example of stability on the lazy stick
is proved   in \cite{SZ3}. This example involves perturbations that are
localized at the ends of the stick. Further examples are discussed in
\cite{SZwave}.
\end{remark}

\end{document}